\newtheorem{lemma}{\textbf{Lemma}}
\newtheorem{thm}{\textbf{Theorem}}
\newtheorem{ass}{\textbf{Assumption}}
\newtheorem{rem}{Remark}
\newcommand{\beq}{\begin{equation}}
\newcommand{\eeq}{\end{equation}}
\newcommand{\beqa}{\begin{eqnarray}}
\newcommand{\eeqa}{\end{eqnarray}}
\newcommand{\beqas}{\begin{eqnarray*}}
\newcommand{\eeqas}{\end{eqnarray*}}
\newcommand{\ba}{\begin{array}}
\newcommand{\ea}{\end{array}}
\newcommand{\bi}{\begin{itemize}}
\newcommand{\ei}{\end{itemize}}
\newcommand\dom[1]{\mathrm{dom}\,#1}
\newcommand\res{\mathrm{res}}
\newcommand\dist{\mathrm{dist}}
\newcommand\rr{\mathbb{R}}
\newcommand\oo[1]{\mathcal{O}\left(#1\right)}
\newcommand\cS{\mathcal{S}}
\newcommand\hS{\mathcal{D}}
\newcommand\nn{N}
\newcommand\mm{\bar M}
\newcommand\tf{\tilde f}
\newcommand\tg{\tilde\gamma}
\newcommand\td{\tilde\Delta}
\newcommand\cO{\mathcal{O}}
\newcommand\lref[1]{Lemma \ref{#1}}
\newcommand\tref[1]{Theorem \ref{#1}}
\newcommand\cref[1]{Corollary \ref{#1}}
\newcommand\aref[1]{Algorithm \ref{#1}}
\newcommand\mref[1]{(\ref{#1})}
\newcommand{\mcK}{{\mathcal K}}
\newcommand{\argmin}{\arg\min}
\newcommand{\prox}{\mathrm{prox}}
\def\bbK{{\mathbb K}}
\def\bbT{{\mathbb T}}
\def\br{{\bar r}}
\def\cl{{\mathrm{cl}}}
\def\cT{{T}}
\def\cX{{\cal X}}
\def\cY{{\cal Y}}
\def\hS{{\widehat \cS}}
\def\cN{{\mathcal N}}
\def\cQ{{\mathcal Q}}
\def\hQ{{\widehat\cQ}}
\def\mT{{\mathscr{T}}}
\def\tP{{\tilde P}}
\title{Primal-dual extrapolation methods for monotone inclusions under local Lipschitz continuity}
\author{
Zhaosong Lu
\thanks{
Department of Industrial and Systems Engineering, University of Minnesota, USA (email: {\tt zhaosong@umn.edu}, {\tt mei00035@umn.edu}). This work was partially supported by NSF Award IIS-2211491.}
\and
Sanyou Mei
\footnotemark[1]
}
\date{June 1, 2022 (Revised: August 30, 2024)}
\begin{document}
\maketitle

\begin{abstract}
In this paper we consider a class of monotone inclusion (MI) problems of finding a zero of the sum of two monotone operators,  in which one operator is maximal monotone while the other is \emph{locally Lipschitz} continuous. We propose primal-dual extrapolation methods to solve them using a point and operator extrapolation technique, whose parameters are chosen by a backtracking line search scheme. The proposed methods enjoy an operation complexity of $\oo{\log \varepsilon^{-1}}$ and $\oo{\varepsilon^{-1}\log \varepsilon^{-1}}$, measured by the number of fundamental operations consisting only of evaluations of one operator and resolvent of the other operator, for finding an $\varepsilon$-residual solution of strongly and non-strongly MI problems, respectively.  The latter complexity significantly improves the previously best operation complexity $\oo{\varepsilon^{-2}}$. As a byproduct, complexity results of the primal-dual  extrapolation methods are also obtained for finding an $\varepsilon$-KKT or $\varepsilon$-residual solution of convex conic optimization, conic constrained saddle point, and variational inequality problems under {\it local Lipschitz} continuity.  We provide preliminary numerical results to demonstrate the performance of the proposed methods. 
\end{abstract}

\noindent {\bf Keywords:} Local Lipschitz continuity, primal-dual extrapolation, operator splitting, monotone inclusion, convex conic optimization, saddle point, variational inequality, iteration complexity, operation complexity

\medskip

\noindent {\bf Mathematics Subject Classification:} 47H05, 47J20, 49M29, 65K15, 90C25 

\section{Introduction}
A broad range of optimization, saddle point (SP), and variational inequality (VI)  problems can be solved as a monotone inclusion (MI) problem, namely, finding a point $x$ such that $0\in\mT(x)$, where $\mT: \rr^n\rightrightarrows\rr^n$ is a maximal monotone set-valued (i.e., point-to-set) operator (see Section \ref{notation} for the definition of monotone and maximal monotone operators). In this paper we consider a class of MI problems as follows:
\begin{equation}\label{MI}
\mbox{find }x\in\rr^n \mbox{ such that } 0\in(F+B)(x),
\end{equation}
where $B:\rr^n\rightrightarrows\rr^n$ is a maximal monotone set-valued operator with a 
 nonempty domain denoted by $\dom{B}$, and 
$F$ is a monotone point-valued (i.e., point-to-point) operator on $\cl(\dom{B})$.
It shall be mentioned that $\dom{B}$ is possibly \emph{unbounded}. We make the following {\it additional} assumptions throughout this paper.
\begin{ass}\label{ass0}
\begin{enumerate}[label=(\alph*)]
\item Problem \eqref{MI} has at least one solution.
\item $F+B$ is monotone on $\dom{B}$ with a monotonicity parameter $\mu \ge 0$ such that
\begin{equation}\label{monotone}
\langle u-v,x-y\rangle\geq\mu\|x-y\|^2 \qquad \forall x,y\in\dom{B}, u\in(F+B)(x), v\in(F+B)(y).
\end{equation}
\item $F$ is locally Lipschitz continuous on $\mathrm{cl}(\dom{B})$.\footnote{See Section \ref{notation} for the definition of local Lipschitz continuity of a mapping on a closed set.}
\item The resolvent of $\gamma B$ can be exactly evaluated for any $\gamma>0$.
\end{enumerate}
\end{ass}
The \emph{local Lipschitz continuity} of $F$ on $\mathrm{cl}(\dom{B})$ is generally weaker than the (global) Lipschitz continuity of $F$ on $\mathrm{cl}(\dom{B})$ usually imposed in the literature. Moreover, it can sometimes be easily verified. For example, if $F$ is continuously differentiable on $\mathrm{cl}(\dom{B})$, it is clearly locally Lipschitz continuous there. In addition, by the maximal monotonicity of $B$ and Assumptions \ref{ass0}(b) and \ref{ass0}(c), it can be observed that $F+B$ is maximal monotone (e.g., see \cite[Proposition A.1]{monteiro2011complexity}) and it is also strongly monotone when $\mu>0$.

Several special cases of problem \eqref{MI} have been considerably studied in the literature. For example, when $F$ is \emph{cocoercive}\footnote{$F$ is cocoercive if there exists some $\sigma>0$ such that $\langle F(x)-F(y), x-y\rangle \ge \sigma \|F(x)-F(y)\|^2$ for all $x,y\in \dom{F}$. It can be observed that if $F$ is cocoercive, then it is monotone and Lipschitz continuous on $\dom{F}$.} problem \eqref{MI} can be suitably solved by  a splitting inertial proximal method \cite{moudafi2003convergence}, a Halpern fixed-point splitting method \cite{tran2022connection}, and also the classical forward-backward splitting (FBS) method \cite{lions1979splitting,passty1979ergodic} that generates a solution sequence $\{x^k\}$ according to
\[
x^{k+1}=\left(I+\gamma_k B\right)^{-1}\big(x^k-\gamma_k F(x^k)\big) \quad \forall k \ge 1.
\]
In addition, a modified FBS (MFBS) method \cite{tseng2000modified}, its variant \cite{monteiro2010complexity},  an inertial forward-backward-forward splitting method \cite{boct2016inertial}, and an extra anchored gradient method \cite[Algorithm 3]{kovalev2022first} were proposed for \eqref{MI} with $F$ being \emph{Lipschitz continuous}. It shall be mentioned that operation complexity bounds of $\oo{\varepsilon^{-2}}$ and $\oo{\varepsilon^{-1}}$, measured by the number of fundamental operations consisting of evaluations of $F$ and resolvent of $B$, were respectively established for the variant of MFBS method \cite[Theorem 4.6]{monteiro2010complexity} and the extra anchored gradient method \cite[Theorem 2]{kovalev2022first} for finding an $\varepsilon$-residual solution\footnote{An $\varepsilon$-residual solution of problem \mref{MI} is a point $x\in\dom{B}$ satisfying $\res_{F+B}(x) \le \varepsilon$, where $\res_{F+B}(x) = \inf\{\|v\|: v\in (F+B)(x)\}$.} of \eqref{MI} with Lipschitz continuous $F$.

There has been little algorithmic development for solving problem \mref{MI} with locally Lipschitz continuous $F$. Indeed, the MFBS method \cite{tseng2000modified} and the forward-reflected-backward splitting (FRBS) method \cite[Algorithm 3.1]{malitsky2020forward} appear to be the only existing methods for solving this problem. The MFBS method modifies the classical FRBS method in the spirit of the extragradient method \cite{Kor76} for monotone variational inequalities, while the FRBS method modifies the forward term in the classical FBS method using an operator extrapolation technique that has been popularly used to design algorithms for solving optimization, SP, and VI problems (e.g., \cite{huang2021unifying, huang2022new, kotsalis2020simple, Mok20-2, popov1980modification}). Specifically, the FRBS method generates a solution sequence $\{x^k\}$ according to
\beq \label{FBS}
x^{k+1}=\left(I+\gamma_k B\right)^{-1}\left(x^k-\gamma_k F(x^k)-\gamma_{k-1}(F(x^k)-F(x^{k-1}))\right) \quad \forall k \ge 1
\eeq
for a suitable choice of stepsizes $\{\gamma_k\}$. Global convergence to a solution of problem \eqref{MI} are established for these methods in \cite{malitsky2020forward,tseng2000modified}, respectively. Moreover, it can be shown that the FRBS method enjoys an operation complexity of $\cO(\varepsilon^{-2})$ for finding an $\varepsilon$-residual solution of \eqref{MI} by using \cite[equation (2.14), Theorem 3.4, and Lemmas 3.2 and 3.3]{malitsky2020forward}, although this result is not established in \cite{malitsky2020forward}. In addition, when $B=\partial g$, where $g$ is a proper closed convex function,  an adaptive golden ratio algorithm was proposed in \cite[Algorithm 1]{malitsky2020golden}. While \cite{malitsky2020golden} did not specifically study the operation complexity of this algorithm for finding an $\varepsilon$-residual solution of \eqref{MI} with $B=\partial g$, it can be shown that the algorithm achieves an operation complexity of $\cO(\varepsilon^{-2})$ for such a solution by using \cite[equation (34) and Lemma 2]{malitsky2020golden}.

As seen from the above discussion, there is a significant gap between the best operation complexities of $\cO(\varepsilon^{-2})$ and  $\cO(\varepsilon^{-1})$ for finding an $\varepsilon$-residual solution of \eqref{MI} and its special case with Lipschitz continuous $F$, which are achieved by the FRBS method \cite{malitsky2020forward} and the extra anchored gradient method \cite{kovalev2022first}, respectively. 
To significantly shorten this gap, in this paper we propose new variants of FBS method, called {\it primal-dual} (PD) {\it extrapolation} methods, for finding an $\varepsilon$-residual solution of \eqref{MI} with complexity guarantees. In particular, we first propose a PD extrapolation method for solving a strongly MI problem, namely, problem \eqref{MI} with $\mu>0$, by modifying the forward term in the FBS method using a \emph{point and operator extrapolation technique} that has recently been used to design algorithms for solving stochastic VI problems in  \cite{huang2022new} and problem \eqref{MI} with Lipschitz continuous $F$ in \cite{malitsky2020forward}.  Specifically, this PD extrapolation method generates a solution sequence $\{x^k\}$ according to
\[
x^{k+1}=\left(I+\gamma_k B\right)^{-1}\left(x^k+\alpha_k(x^k-x^{k-1})- \gamma_k [F(x^k)+\beta_k (F(x^k)-F(x^{k-1}))]\right) \quad \forall k \ge 1,
\]
where the sequences $\{\alpha_k\}$, $\{\beta_k\}$ and $\{\gamma_k\}$ are updated by a backtracking line search scheme (see \aref{ALG}). We show that this PD extrapolation method enjoys an operation complexity of $\oo{\log \varepsilon^{-1}}$ for finding an $\varepsilon$-residual solution of \eqref{MI} with $\mu>0$. We then propose another PD extrapolation method for solving a non-strongly MI problem, namely,  problem \mref{MI} with $\mu=0$ by applying the above PD extrapolation method to approximately solve a sequence of strongly MI problems $0\in (F_k+B)(x)$ with $F_k$ being a perturbation of $F$ (see \aref{PPA}). We show that the resulting PD extrapolation method enjoys an operation complexity of $\oo{\varepsilon^{-1}\log \varepsilon^{-1}}$ for finding an $\varepsilon$-residual solution of  problem \eqref{MI} with $\mu=0$, which 
significantly improves the previously best operation complexity $\cO(\varepsilon^{-2})$ achieved by  the FRBS method  \cite{malitsky2020forward}. 

The main contributions of our paper are summarized as follows.

\bi
\item Primal-dual extrapolation methods are proposed for the MI problem \eqref{MI} with locally Lipschitz continuous $F$, which enjoy several attractive features: (i) they are applicable to a broad range of problems since only local rather than global Lipschitz continuity of $F$ is required; (ii) they adopt a point and operator extrapolation technique with fundamental operations consisting only of evaluations of $F$ and resolvent of $B$; (iii) they are equipped with a verifiable termination criterion and output an $\varepsilon$-residual solution of problem \eqref{MI} with complexity guarantees.   

\item We show that an $\varepsilon$-residual solution of problem \eqref{MI} with locally Lipschitz continuous $F$ can be found by our methods with an operation complexity of $\oo{\log \varepsilon^{-1}}$ and  $\oo{\varepsilon^{-1}\log \varepsilon^{-1}}$ for $\mu>0$ and $\mu=0$, respectively. The latter complexity significantly improves the previously best operation complexity $\cO(\varepsilon^{-2})$ achieved by the FRBS method \cite{malitsky2020forward}.  

\item The applications of our proposed methods to convex conic optimization, conic constrained SP, and VI problems are studied. Best complexity results for finding an $\varepsilon$-KKT or $\varepsilon$-residual solution of these problems under local Lipschitz continuity are obtained.
\ei

The rest of this paper is organized as follows. In Section \ref{notation} we introduce some notation and terminology. In Sections~\ref{PDE1} and \ref{PDE2}, we propose PD extrapolation methods for problem \eqref{MI} with $\mu>0$ and $\mu=0$, respectively, and study their complexity. In Section \ref{app}, we study the applications of the PD extrapolation methods for solving convex conic optimization, conic constrained saddle point, and variational inequality problems. In addition, we present some preliminary numerical results and the proofs of the main results in Sections \ref{sec:exp} and \ref{sec:proof}, respectively.  Finally, we make some concluding remarks in Section~\ref{sec:cr}.

\subsection{Notation and terminology} \label{notation}
The following notations will be used throughout this paper. Let $\rr^n$ denote the Euclidean space of dimension $n$, $\langle\cdot,\cdot\rangle$ denote the standard inner product, and $\|\cdot\|$ stand for the Euclidean norm. For any $\omega\in\rr$, let $\omega_+=\max\{\omega,0\}$ and $\lceil \omega \rceil$ denote the least integer number greater than or equal to $\omega$.

Given a proper closed convex function $h:\rr^n\rightarrow(-\infty,\infty]$, $\partial h$ denotes its subdifferential.  The proximal operator associated with $h$ is denoted by  
$\prox_h$, which is defined as
\begin{equation*}
\prox_h(z) = \argmin_{x\in\rr^n} \left\{ \frac{1}{2}\|x - z\|^2 + h(x) \right\} \qquad \forall z \in \rr^n.
\end{equation*}
Given an operator $\mT$,  $\dom{\mT}$ and $\cl(\dom{\mT})$ denote its domain and the closure of its domain, respectively. 
For a mapping $g:\rr^n \to \rr^m$, $\nabla g$ denotes the transpose of the Jacobian of $g$. The mapping $g$ is called $L$-Lipschitz continuous on a set $\Omega$ for some constant $L>0$ if $\|g(x)-g(y)\| \le L\|x-y\|$ for all $x,y\in\Omega$. Besides, $g$ is called locally Lipschitz continuous on a closed set $\widehat\Omega$ if $g$ is $L_\Omega$-Lipschitz continuous on any compact set $\Omega\subseteq\widehat\Omega$ for some $L_\Omega>0$. Let $I$ stand for the identity operator. For a maximal monotone operator $\mT:\rr^n\rightrightarrows\rr^n$, the resolvent of $\mT$ is denoted by 
$(I+\mT)^{-1}$, which is a mapping   defined everywhere in $\rr^n$. In particular, $z=(I+\mT)^{-1}(x)$ if and only if $x\in (I+\mT)(z)$. Since the evaluation of $(I+\gamma \mT)^{-1}(x)$ is often as cheap as that of $(I+\mT)^{-1}(x)$, we count the evaluation of  $(I+\gamma \mT)^{-1}(x)$ as \emph{one evaluation of resolvent} of $\mT$ for any $\gamma>0$ and $x$. The \emph{residual} of $\mT$ at a point $x\in\dom{\mT}$ is defined as $\res_\mT(x) = \inf\{\|v\|: v\in\mT(x)\}$. For any given $\varepsilon>0$, a point $x$ is called an \emph{$\varepsilon$-residual solution} of problem \mref{MI} if $x\in\dom{B}$ and $\res_{F+B}(x) \le \varepsilon$.

Given a nonempty closed convex set $C\subseteq\rr^n$, $\dist(z,C)$ stands for the Euclidean distance from $z$ to $C$, and $\Pi_C(z)$ denotes the Euclidean projection of $z$ onto $C$, namely, 
\[
 \Pi_C(z) = \argmin\{\|z - x\|: x\in C\}, \quad \dist(z,C)=\left\|z - \Pi_C(z)\right\|, \quad \forall z \in \rr^n.
\]
The normal cone of $C$ at any $z\in C$ is denoted by $\cN_C(z)$. For a closed convex cone $\mcK$, we use $\mcK^*$ to denote the dual cone of $\mcK$, that is,  $\mcK^* = \{y\in\rr^m: \langle y,x\rangle\geq 0, \; \forall x\in\mcK\}$.

\section{A primal-dual extrapolation method for problem \eqref{MI} with $\mu>0$} 
\label{PDE1}

In this section we propose a primal-dual extrapolation method for solving 
a strongly MI problem \eqref{MI}, namely, the case in which Assumption \ref{ass0}(b) holds with $\mu>0$. Our method is a variant of the classical forward-backward splitting (FBS) method \cite{lions1979splitting,passty1979ergodic}. 
It modifies the forward term in \eqref{FBS} by using a primal and dual extrapolation technique\footnote{In the context of optimization, the operator $F$ is typically the gradient of a function and $F(x)$ can be viewed as a point in the dual space. As a result, $\{x^t\}$ and $\{F(x^t)\}$ generated by this method can be respectively viewed as a primal and dual sequence and thus the extrapolations on them are called primal and dual extrapolations just for simplicity. Accordingly, we refer to our method as a \emph{primal-dual extrapolation method}.} that has recently been proposed to design algorithms for solving stochastic VI problems in \cite{huang2022new}. Note that the choice of the parameters for extrapolations in \cite{huang2022new} requires Lipschitz continuity of $F$. Since $F$ is only assumed to be locally Lipschitz continuous in this paper, the choice of them in \cite{huang2022new} is not applicable to our method. To resolve this issue, we propose a \emph{backtracking line search scheme} to decide on parameters for extrapolations and splitting.\footnote{It shall be mentioned that backtracking line search schemes have been widely used for designing algorithms for solving MI problems (e.g., see \cite{malitsky2020forward,tseng2000modified}).} In addition, we propose a \emph{verifiable termination criterion}, which guarantees that our method \emph{outputs an $\epsilon$-residual solution} of problem \eqref{MI} with $\mu>0$ for any given tolerance $\epsilon$. The proposed method is presented in \aref{ALG} below.

\begin{algorithm}[H]
\caption{A primal-dual extrapolation method for problem \eqref{MI} with $\mu>0$}
\label{ALG}
\begin{algorithmic}[1]
\REQUIRE $\epsilon>0$,  $\gamma_0>0$, $\delta \in (0,1)$, $0<\nu \le 1/2$, $\eta\in [0, \nu/(1+\nu))$, and $x^0=x^1\in\dom{B}$.
\FOR{$t=1,2,\dots$}
\STATE Compute
\begin{align}
\label{iter}
x^{t+1}=(I+\gamma_tB)^{-1}\left(x^t+\alpha_t(x^t-x^{t-1})-\gamma_t(F(x^t)+\beta_t(F(x^t)-F(x^{t-1})))\right),
\end{align}
where 
\begin{align}\label{para}
\gamma_t=\min\{\gamma_0,\delta^{-1}\gamma_{t-1}\}\delta^{n_t},\ \beta_t=\frac{\gamma_{t-1}}{\gamma_t}\left(1+\frac{2\mu\gamma_{t-1}}{1-\eta}\right)^{-1},\ \alpha_t=\frac{\eta\gamma_{t}\beta_t}{\gamma_{t-1}},
\end{align}
and $n_t$ is the smallest nonnegative integer such that
\begin{align}
\label{term}
\|F(x^{t+1})-F(x^t)-\eta\gamma_t^{-1}(x^{t+1}-x^t)\| \leq \nu(1-\eta)\gamma_t^{-1}\|x^{t+1}-x^t\|.
\end{align}
\STATE Terminate the algorithm and output $x^{t+1}$ if 
\beq\label{alg-term}
\|\gamma_t^{-1}(x^t-x^{t+1}+\alpha_t(x^t-x^{t-1}))+F(x^{t+1})-F(x^t)-\beta_t(F(x^t)-F(x^{t-1}))\|\leq\epsilon.
\eeq
\ENDFOR
\end{algorithmic}
\end{algorithm}

\begin{rem}
(i) If $\eta= 0$, \aref{ALG} is reduced to a dual extrapolation method. Besides, $\alpha_t$ and $\beta_t$ are for primal-dual extrapolation and $\gamma_t$ is the stepsize, while $\gamma_0$, $\delta$ and $\nu$ are used for backtracking line search. For the sake of generality, we provide a flexible choice for $\nu$ and $\eta$ satisfying the conditions stated in the input line of \aref{ALG}. Nevertheless, one can easily specify them, for example, letting $(\nu,\eta)=(0.5,0.33)$, which appears to be the best choice for \aref{ALG} as observed in practice.  

(ii) As will be shown in Lemma \ref{L2}, it holds that 
\[
\gamma_t^{-1}(x^t-x^{t+1}+\alpha_t(x^t-x^{t-1}))+F(x^{t+1})-F(x^t)-\beta_t(F(x^t)-F(x^{t-1}))\in (F+B)(x^{t+1}).
\]
As a result, $x^{t+1}$ satisfying \eqref{alg-term} implies that $\res_{F+B}(x^{t+1})\leq\epsilon$, namely, $x^{t+1}$ is an $\epsilon$-residual solution of problem \eqref{MI}. Thus, \eqref{alg-term} provides a verifiable termination criterion for Algorithm \ref{ALG} to find an $\epsilon$-residual solution of \eqref{MI}.

(iii) As will be established below, \aref{ALG}  is well-defined at each iteration. Moreover, one can observe that the fundamental operations of \aref{ALG}  consist only of evaluations of $F$ and resolvent of $B$. Specifically, at iteration $t$, \aref{ALG} requires $n_t+1$ evaluations of $F$ and resolvent of $B$ for finding $x^{t+1}$ satisfying \eqref{term}.
\end{rem}

We next establish that \aref{ALG} \emph{well-defined} and \emph{outputs an $\epsilon$-residual solution} of problem \eqref{MI}. We also study its complexity including: (i) \emph{iteration complexity} measured by the number of iterations; (ii) \emph{operation complexity} measured by the number of evaluations of $F$ and resolvent of $B$.

To proceed, we assume throughout this section that problem \eqref{MI} is a strongly MI problem (namely, $\mu>0$) and that $x^*$ is the solution of \eqref{MI}. Let $\{x^t\}_{t\in \bbT}$ denote all the iterates generated by \aref{ALG}, where $\bbT$ is a \emph{subset} of consecutive nonnegative integers starting from $0$.\footnote{For the time being, it is possible that $\bbT=\{0,1,2, \ldots, T\}$ or $\{0,1,2, \ldots\}$ for some $T \geq 0$. The reason for not presuming $\bbT$ to be a finite set here is that the finite termination of Algorithm \ref{ALG} is not yet established. Nevertheless, it will be shown in Theorem \ref{thm-outer} that $\bbT$ is a finite set.}  We also define 
\begin{align}
& r_0=\|x^0-x^*\|, \quad \cS=\left\{x\in\dom{B}:\ \|x-x^*\| \leq\frac{r_0}{\sqrt{1-2\nu^2}}\right\}, \label{set-S} \\
& \bbT-1 = \{t-1: t \in \bbT\},  \quad \xi = \nu(1-\eta)-\eta,   \label{xi}
\end{align}
where $x^0$ is the initial point, and $\nu$ and $\eta$ are the input parameters of Algorithm \ref{ALG}. 

The following lemma establishes that $F$ is \emph{Lipschitz} continuous on $\cS$ and also on an enlarged set induced by  $\gamma_0$, $r_0$, $\nu$, $x^*$, 
  $F$ and $\cS$, albeit $F$ is \emph{locally Lipschitz} continuous on $\cl(\dom{B})$. This result will play an important role in this section.

\begin{lemma} \label{F-Lipschtiz}
Let $\cS$ be defined in \eqref{set-S}. Then the following statements hold.
\bi
\item[(i)] $F$ is $L_\cS$-Lipschitz continuous on $\cS$ for some constant $L_\cS>0$.
\item[(ii)] $F$ is  $L_\hS$-Lipschitz continuous on $\hS$ for some constant $L_\hS>0$, where
\begin{equation}
\label{set-hS}
\hS=\left\{x\in\dom{B}:\ \|x-x^*\|\leq \frac{(5+9\gamma_0 L_\cS)r_0}{3\sqrt{1-2\nu^2}}\right\},
\footnote{The specific choices of the radius associated with $\cS$ and $\hS$ will become clear
from the proofs of Lemmas \ref{nlep} and \ref{nlemma3}.}
\end{equation}
$r_0$ is defined in \eqref{set-S}, and $\gamma_0>0$ and $\nu\in(0,1/2]$ are the input parameters of Algorithm \ref{ALG}.
 \ei
\end{lemma}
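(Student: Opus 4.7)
The plan is to prove both statements as consequences of the standard fact that a locally Lipschitz mapping on a neighborhood of a compact convex set is (globally) Lipschitz on that set. The key observation is that, although $\cS$ and $\hS$ are defined as subsets of $\dom{B}$ (which need not be closed), each is contained in a compact convex subset of $\cl(\dom{B})$, the set on which $F$ is known to be locally Lipschitz by Assumption \ref{ass0}(c).

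For part (i), I would introduce the enclosing set
\[
\bar{\cS} := \{x \in \cl(\dom{B}) : \|x-x^*\| \le r_0/\sqrt{1-2\nu^2}\},
\]
which contains $\cS$ and is closed, bounded, and convex (using convexity of $\dom{B}$, hence of $\cl(\dom{B})$); in particular $\bar{\cS}$ is compact. Local Lipschitz continuity of $F$ on $\cl(\dom{B})$ yields, for each $x \in \bar{\cS}$, numbers $\delta_x > 0$ and $L_x > 0$ such that $F$ is $L_x$-Lipschitz on the relatively open set $B(x,\delta_x) \cap \cl(\dom{B})$. Compactness then lets me extract a finite subcover $\{B(x_i,\delta_{x_i})\}_{i=1}^N$ of $\bar{\cS}$, and I set $L^* := \max_i L_{x_i}$.

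The next step is the standard chaining argument: by Lebesgue's number lemma applied to this finite subcover, there exists $\eta > 0$ such that any two points of $\bar{\cS}$ within distance $\eta$ lie in a common ball $B(x_i,\delta_{x_i})$. For arbitrary $y,z \in \bar{\cS}$, convexity of $\bar{\cS}$ ensures the line segment from $y$ to $z$ lies in $\bar{\cS}$; partitioning it into $K := \lceil \|y-z\|/\eta \rceil$ equal subsegments of length at most $\eta$, applying the local Lipschitz bound on each consecutive pair of endpoints (which both lie in $\cl(\dom{B})$), and telescoping yields $\|F(y)-F(z)\| \le L^*\|y-z\|$. Hence $F$ is $L^*$-Lipschitz on $\bar{\cS}$, and therefore on $\cS \subseteq \bar{\cS}$, which proves (i) with $L_\cS := L^*$.

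Part (ii) then follows by the same argument: with $L_\cS$ now fixed, $\hS$ is a bounded subset of $\dom{B}$ contained in the compact convex set $\{x \in \cl(\dom{B}) : \|x-x^*\| \le (2+4\gamma_0 L_\cS)r_0/\sqrt{1-2\nu^2}\}$, and the identical finite-cover-plus-chaining argument delivers a constant $L_\hS$. There is no genuine difficulty here; the only subtlety worth flagging is the need to pass from $\cS$ and $\hS$, which live in the possibly non-closed $\dom{B}$, to their closed, bounded, convex enclosures inside $\cl(\dom{B})$, since both compactness (for extracting the finite subcover and invoking Lebesgue's lemma) and convexity (for the chaining via line segments) are essential to the argument.
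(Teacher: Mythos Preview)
Your proposal is correct and takes essentially the same approach as the paper: the paper simply notes that $\cS$ is a convex and bounded subset of $\dom{B}$ and, invoking the local Lipschitz continuity of $F$ on $\cl(\dom{B})$, asserts the existence of $L_\cS$ as ``not hard to observe,'' then repeats the observation for $\hS$. Your argument supplies the standard compactness-plus-chaining details (passing to the closed convex hull in $\cl(\dom{B})$, finite subcover, Lebesgue number, telescoping along a segment) that the paper leaves implicit.
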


\begin{proof}
Notice that $\cS$ is a bounded subset in $\dom{B}$. By this and the local Lipschitz continuity of $F$ on $\cl(\dom{B})$, there exists some constant  $L_\cS>0$ such that $F$ is $L_\cS$-Lipschitz continuous on $\cS$. Hence, statement (i) holds and moreover the set $\hS$ is well-defined. By a similar argument, one can see that statement (ii) also holds.
\end{proof}

The following theorem shows that \aref{ALG} is well-defined at each iteration.  Its proof is deferred to Section \ref{sec:proof}.

 \begin{thm} \label{np1}
 Let $\{x^t\}_{t\in\bbT}$ and $\{n_t\}_{1 \leq t\in\bbT-1}$ be generated by \aref{ALG} and $\xi$ be defined in \eqref{xi}. Then the following statements hold. 
\bi
\item[(i)] \aref{ALG} is well-defined at each iteration.
\item[(ii)] $x^t\in\cS$ for all $t \in \bbT$, and moreover, $\sum_{i=1}^t n_i \le M+t$ for all 
$1 \le t \in \bbT-1$, where $\cS$ is defined in \eqref{set-S} and
\begin{align} \label{nlocal1}
M=\left\lceil\log\left(\frac{\xi}{\gamma_0 L_\hS}\right)/\log\delta\right\rceil_+.
\end{align}
\ei
\end{thm}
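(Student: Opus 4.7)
The plan is to prove the two statements simultaneously by induction on $t\in\bbT$. The base case is immediate: since $x^0=x^1\in\dom{B}$ with $\|x^0-x^*\|=r_0\le r_0/\sqrt{1-2\nu^2}$, both $x^0$ and $x^1$ lie in $\cS$. For the inductive step, I assume that $x^0,\ldots,x^t\in\cS$ and that previous inner loops terminated with the bound \eqref{nlocal1}, and I will show that (a) the inner loop at iteration $t$ terminates with $n_t\le\nn_t$, and (b) the resulting $x^{t+1}$ again belongs to $\cS$.

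For (a), I first observe a sufficient condition for \eqref{term}: if the trial point $x^{t+1}$ lies in $\hS$, then $L_\hS$-Lipschitz continuity of $F$ on $\hS$ from \lref{F-Lipschtiz}(ii) combined with the triangle inequality gives
\[
\gamma_t\|F(x^{t+1})-F(x^t)-\kappa_t\gamma_t^{-1}(x^{t+1}-x^t)\| \le (\gamma_t L_\hS+\kappa_t)\|x^{t+1}-x^t\|,
\]
which is bounded by $\nu(1-\kappa_t)\|x^{t+1}-x^t\|$ whenever $\gamma_t=\gamma_0\delta^{n_t}\le(\nu(1-\kappa_t)-\kappa_t)/L_\hS$; solving for $n_t$ produces exactly the bound $\nn_t$ in \eqref{nlocal1}. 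It then remains to certify $x^{t+1}\in\hS$ for every trial $\gamma_t$ examined in backtracking. Exploiting the nonexpansiveness of $(I+\gamma_tB)^{-1}$ together with the identity $(I+\gamma_tB)^{-1}(x^*-\gamma_tF(x^*))=x^*$ (which holds because $-F(x^*)\in B(x^*)$), I would estimate
\[
\|x^{t+1}-x^*\|\le \|x^t-x^*\|+\alpha_t\|x^t-x^{t-1}\|+\gamma_t\|F(x^t)-F(x^*)\|+\gamma_t\beta_t\|F(x^t)-F(x^{t-1})\|.
\]
Using $x^{t-1},x^t\in\cS$ (so $F$ is $L_\cS$-Lipschitz there and $\|x^t-x^{t-1}\|\le 2r_0/\sqrt{1-2\nu^2}$), together with the parameter estimates $\alpha_t\le\xi<1/2$ and $\gamma_t\beta_t\le(1+\xi)\gamma_{t-1}\le(1+\xi)\gamma_0$ extracted from \eqref{para}, yields the bound $(2+4\gamma_0L_\cS)r_0/\sqrt{1-2\nu^2}$ defining $\hS$ in \eqref{set-hS}.

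For (b), with \eqref{term} now in force, I would combine the update rule \eqref{iter} (rewritten as $\gamma_t^{-1}(x^t-x^{t+1}+\alpha_t(x^t-x^{t-1}))+F(x^{t+1})-\beta_t(F(x^t)-F(x^{t-1}))\in(F+B)(x^{t+1})$) with the $\mu$-strong monotonicity of $F+B$ applied at $x^{t+1}$ and $x^*$. After completing squares, this produces a Lyapunov inequality of the form $\Phi_{t+1}\le\Phi_t$, where
\[
\Phi_t=\|x^t-x^*\|^2+\frac{\kappa_{t-1}}{1-\kappa_{t-1}}\,\|x^t-x^{t-1}\|^2+\frac{2\gamma_{t-1}}{1-\kappa_{t-1}}\langle F(x^t)-F(x^{t-1}),x^t-x^*\rangle,
\]
with the parameter rule \eqref{para} engineered precisely so that the residual produced by the $F$-difference cross term at step $t+1$ is cancelled by \eqref{term} modulo a factor $\nu^2\|x^{t+1}-x^t\|^2$, and the strong monotonicity contribution absorbs the remainder. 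Telescoping from the initialization $\Phi_1=r_0^2$ and using $2|\langle a,b\rangle|\le\nu\|a\|^2+\nu^{-1}\|b\|^2$ to dominate the $F$-difference term in $\Phi_{t+1}$ will give $(1-2\nu^2)\|x^{t+1}-x^*\|^2\le r_0^2$, completing the induction.

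The principal obstacle is the Lyapunov construction in part (b): the coefficients in $\Phi_t$ must simultaneously (i) align with the exact multiplicative structure of \eqref{para} so that terms telescope, (ii) leave enough slack that \eqref{term} (with margin $\nu<1/2$) controls the $F$-difference cross term via Cauchy--Schwarz without spoiling the decrease, and (iii) yield a bound compatible with the radius defining $\cS$. Once the correct $\Phi_t$ is identified, the rest is bookkeeping; guessing it correctly is the real work.
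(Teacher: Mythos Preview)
Your overall plan---induction, with part (a) securing the backtracking bound via membership in the enlarged set $\hS$, and part (b) a Lyapunov/telescoping argument to place $x^{t+1}$ back in $\cS$---matches the paper exactly. Your treatment of (a) is essentially the paper's Lemmas~\ref{nlemma3}--\ref{nlemma4}; you use nonexpansiveness of the resolvent where the paper uses monotonicity of $B$, but the resulting bound is the same and your numerical estimates ($\alpha_t\le\xi$, $\gamma_t\beta_t\le(1+\xi)\gamma_0$) are in fact slightly sharper than the paper's $1/2$ and $3\gamma_0/2$.

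The gap is your candidate $\Phi_t$ in (b). The parameter rule \eqref{para} contains the factor $(1+2\mu\tg_{t-1})^{-1}$ in $\beta_t$, which means $\tg_t\beta_t=\tg_{t-1}/(1+2\mu\tg_{t-1})$ rather than $\tg_{t-1}$. Consequently, when you try to match the cross term $\langle F(x^t)-F(x^{t-1}),x^t-x^*\rangle$ that appears on the left (with coefficient $\tg_t\beta_t$ coming from the one-step inequality) against the term you put into $\Phi_t$ (with coefficient $\tg_{t-1}$), they disagree by exactly that factor, and the mismatch $\bigl(\tg_{t-1}-\tg_t\beta_t\bigr)\langle\Delta^t,x^t-x^*\rangle$ is sign-indefinite. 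The strong-monotonicity gain $\mu\tg_t\|x^{t+1}-x^*\|^2$ lives on the wrong variable to absorb it, so $\Phi_{t+1}\le\Phi_t$ does not follow.

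The paper's fix is precisely the one you anticipate but do not identify: it introduces the transformed difference $\td^t=\Delta^t-\kappa_{t-1}\gamma_{t-1}^{-1}(x^t-x^{t-1})$ and $\tg_t=\gamma_t/(1-\kappa_t)$ (which absorb your separate $\|x^t-x^{t-1}\|^2$ term), and then uses \emph{weights} $\theta_t=\prod_{i=1}^{t-1}(1+2\mu\tg_i)$. The identity $\theta_{t+1}\tg_{t+1}\beta_{t+1}=\theta_t\tg_t$ is exactly what \eqref{para} is engineered for, so the weighted sum telescopes cleanly (Lemma~\ref{nl2}). The leftover ``residual'' sum $R_k$ is shown nonnegative via \eqref{term} and Cauchy--Schwarz, and the final boundary term is handled just as you describe, yielding $\|x^{k+1}-x^*\|^2\le r_0^2/((1-2\nu^2)\theta_k)\le r_0^2/(1-2\nu^2)$ (Lemma~\ref{nlep}). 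So your outline is right; the missing ingredient is the $\theta_t$-weighting rather than a bare Lyapunov.
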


The next theorem presents iteration and operation complexity of \aref{ALG} for finding an $\epsilon$-residual solution of problem \mref{MI} with $\mu>0$, whose proof is deferred to Section \ref{sec:proof}.

\begin{thm}\label{thm-outer}
Let $\gamma_0$, $\delta$, $\nu$, $\eta$ and $\epsilon$ be given in \aref{ALG}, $L_\cS$ and $L_\hS$ be given in Lemma \ref{F-Lipschtiz}, and $r_0$ and $\xi$ be defined in \eqref{set-S} and \eqref{xi}. Suppose that $\mu>0$, i.e., $F+B$ is strongly monotone on $\dom{B}$. Then \aref{ALG} terminates and outputs an $\epsilon$-residual solution of problem \mref{MI}
in at most $T$ iterations. Moreover, the number of evaluations of $F$ and resolvent of $B$ performed in \aref{ALG} is no more than $\nn$, respectively, where
\begin{align}
\cT=& \ 3+\left\lceil2\log{\left(\frac{r_0\left(8+12\gamma_0L_\cS\right)}{3\epsilon\sqrt{1-2\nu^2}\min\left\{L_\hS^{-1}\delta\xi,\gamma_0\right\}}\right)}\middle/\log\left(1+\frac{2\mu}{1-\eta}\min\Big\{L_\hS^{-1}\delta\xi, \gamma_0\Big\}\right)\right\rceil_+, \label{t1k} \\
\nn
=& \ 2T+\left\lceil\log\left(\frac{\xi}{\gamma_0 L_\hS}\right)/\log\delta\right\rceil_+. \label{t1n}
\end{align}
\end{thm}

\begin{rem} \label{remark1}
(i) It can be seen from Theorem \ref{thm-outer} that \aref{ALG} enjoys 
an iteration and operation complexity of $\oo{\log \epsilon^{-1}}$ for finding an $\epsilon$-residual solution of problem \eqref{MI} with $\mu>0$ under the assumption that $F$ is locally Lipschitz continuous on $\cl(\dom B)$. In addition, notice that if $\gamma_0\geq \delta\xi/L_\hS$, 
\[
\log\left(1+\frac{2\mu}{1-\eta}\min\Big\{L_\hS^{-1}\delta\xi, \gamma_0\Big\}\right) \approx \frac{2\delta\xi}{1-\eta}\cdot \frac{\mu}{L_\hS}.
\]
It then follows from \eqref{t1k} and \eqref{t1n} that if $\gamma_0\geq \delta\xi/L_\hS$, $\cT$ and $\nn$ are roughly proportional to $L_\hS/\mu$. Hence, $L_\hS/\mu$ can be viewed as the ``condition number'' of problem \mref{MI} with $\mu>0$.

 (ii) Algorithm \ref{ALG} will become a linearly convergent method if setting $\epsilon=0$. Indeed, one can observe from Lemma \ref{nlep} that the sequence $\{x^k\}$ generated by Algorithm \ref{ALG} with $\epsilon=0$ satisfies $\|x^k-x^*\|^2 \leq (1-2\nu^2)^{-1}(1+2\mu\underline\gamma)^{2-k}\|x^0-x^*\|^2$ for all $k\geq 2$, where $x^*$ is the solution of \eqref{MI} and $\underline\gamma:=\inf_k \gamma_k $ is a positive number due to Theorem \ref{np1}.
\end{rem}

\section{A primal-dual extrapolation method for problem \eqref{MI} with $\mu=0$} \label{PDE2}

In this section we propose a primal-dual extrapolation method for solving a non-strongly MI problem \eqref{MI}, namely, the case in which Assumption \ref{ass0}(b) holds with $\mu=0$. Our method consists of applying \aref{ALG} to approximately solve a sequence of strongly MI problems $0\in (F_k+B)(x)$, where $F_k$ is a perturbation of $F$ given in \eqref{Fk}. The proposed method is presented in \aref{PPA}.

\begin{algorithm}[h]
\caption{A primal-dual extrapolation method for problem \eqref{MI} with $\mu=0$}
\label{PPA}
\begin{algorithmic}[1]
\REQUIRE $\varepsilon>0$, $\gamma_0>0$, $z^0\in\dom{B}$, $0<\delta<1$, $0<\nu\leq1/2$, $\eta \in [0,\nu/(1+\nu))$, $\rho_0\geq1$, $0<\tau_0\leq1$, $\zeta>1$, $0<\sigma<1/\zeta$, $\rho_k=\rho_0\zeta^k$, $\tau_k=\tau_0\sigma^k$ for all $k \ge 0$.
\FOR{$k = 0, 1, \dots$}
\STATE Call \aref{ALG} with $F \leftarrow F_k$, $\mu\leftarrow \rho_k^{-1}$, $\epsilon \leftarrow \tau_k$, $x^0=x^1 \leftarrow z^k$ and the parameters $\gamma_0$, $\eta$, $\delta$ and $\nu$, and output $z^{k+1}$, where 
\begin{align}
F_k(x)=  F(x)+\rho_k^{-1}(x-z^k) \quad \forall x \in \dom F \label{Fk}.
\end{align}
\STATE Terminate this algorithm and output $z^{k+1}$ if 
\begin{align}
\rho_k^{-1}\|z^{k+1}-z^k\| +  \tau_k\leq \varepsilon. \label{ppa-term}
\end{align}
\ENDFOR
\end{algorithmic}
\end{algorithm}

\begin{rem}
(i) In \aref{PPA}, the parameters $\gamma_0$, $\delta$, $\nu$ and $\eta$ have the same meaning as those for Algorithm \ref{ALG} (see Remark 1(i)). Besides, $\rho_0$, $\tau_0$, $\zeta$ and $\sigma$ are used for subproblem regularization and subproblem termination criterion. 

(ii) It is easy to see that \aref{PPA} is well-defined at each iteration and equipped with a verifiable termination criterion, while it shares the same fundamental operations as Algorithm \ref{ALG}, consisting only of evaluations of $F$ and resolvent of $B$. 
\end{rem}

We next show that \aref{PPA} \emph{outputs an $\varepsilon$-residual solution} of problem \eqref{MI}. We also study its complexity including: (i) \emph{iteration complexity} measured by the number of iterations; (ii) \emph{operation complexity} measured by the total number of evaluations of $F$ and resolvent of $B$.

To proceed, we assume that $x^*$ is an arbitrary solution of problem \eqref{MI} and fixed throughout this section. Let $\{z^k\}_{k\in \bbK}$ denote all the iterates generated by \aref{PPA}, where $\bbK$ is a \emph{subset} of consecutive nonnegative integers starting from $0$.\footnote{For the time being, it is possible that $\bbK=\{0,1,2, \ldots, K\}$ or $\{0,1,2, \ldots\}$ for some $K \geq 0$. The reason for not presuming $\bbK$ to be a finite set is that the finite termination of \aref{PPA} is not yet established. Nevertheless, it will be shown in Theorem \ref{thm2} that $\bbK$ is a finite set.} We also define $\bbK-1 = \{k-1: k \in \bbK\}$, and
\begin{equation}\label{set-Q}
\br_0 = \|z^0-x^*\|, \quad \cQ=\left\{x\in\dom{B}:\ \|x-x^*\|\leq\left(\frac{1}{\sqrt{1-2\nu^2}}+1\right)\left(\br_0+\frac{\rho_0\tau_0}{1-\sigma\zeta}\right)\right\},
\end{equation}
where $z^0$ is the initial point and $\rho_0$, $\tau_0$, $\nu$, $\zeta$, $\sigma$ are the input parameters of \aref{PPA}. 

The following lemma establishes that $F_k$ is Lipschitz continuous on $\cQ$ and also on an enlarged set induced by $F_k$ and $\cQ$ with a Lipschitz constant independent on $k$. 
This result will play an important role in this section.

\begin{lemma} \label{Fk-Lipschtiz}
Let $F_k$ and $\cQ$ be defined in \eqref{Fk} and \eqref{set-Q}. Then the following statements hold.
\bi
\item[(i)] $F_k$ is $L_\cQ$-Lipschitz continuous on $\cQ$ for some constant $L_\cQ>0$  independent of $k$. 
\item[(ii)] $F_k$ is  $L_\hQ$-Lipschitz continuous on $\hQ$ for some constant $L_\hQ>0$ independent of $k$, where
\begin{equation}
\label{set-hQ}
\hQ=\left\{x\in\dom{B}:\ \|x-x^*\|\leq\left(\frac{5+9\gamma_0L_\cQ}{3\sqrt{1-2\nu^2}}+1\right)\left(\br_0+\frac{\rho_0\tau_0}{1-\sigma\zeta}\right)\right\}.\footnote{The specific choices of the radius associated with $\cQ$ and $\hQ$ will become clear from the proof of Lemma \ref{belong}.}
\end{equation}
 \ei
\end{lemma}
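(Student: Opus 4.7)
The plan is to reduce the claimed Lipschitz bounds for $F_k$ to those for $F$, and then invoke local Lipschitz continuity of $F$ on bounded convex subsets of $\cl(\dom B)$, exactly as in the proof of \lref{F-Lipschtiz}. The only novelty is that the perturbation $(x-z^k)/\rho_k$ must contribute a Lipschitz constant that is uniform in $k$, which is where the sequence $\rho_k=\rho_0\zeta^k$ with $\zeta>1$ enters.

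For statement (i), I would first observe that $\cQ$ as defined in \eqref{set-Q} is a bounded convex subset of $\dom B\subseteq \cl(\dom B)$, since it is the intersection of $\dom B$ with a closed Euclidean ball around $x^*$. By Assumption \ref{ass0}(c), $F$ is locally Lipschitz continuous on $\cl(\dom B)$, so there exists a constant $L_{F,\cQ}>0$, depending only on $\cQ$, such that $\|F(x)-F(y)\|\le L_{F,\cQ}\|x-y\|$ for all $x,y\in\cQ$. Now, for any $x,y\in \cQ$ and any $k\ge 0$,
\begin{equation*}
\|F_k(x)-F_k(y)\|=\left\|F(x)-F(y)+\rho_k^{-1}(x-y)\right\|\le \left(L_{F,\cQ}+\rho_k^{-1}\right)\|x-y\|\le \left(L_{F,\cQ}+\rho_0^{-1}\right)\|x-y\|,
\end{equation*}
where the last inequality uses $\rho_k=\rho_0\zeta^k\ge\rho_0$ for $k\ge 0$. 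Setting $L_\cQ=L_{F,\cQ}+\rho_0^{-1}$ yields a Lipschitz constant independent of $k$, as required.

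For statement (ii), once $L_\cQ$ is available from (i), the set $\hQ$ in \eqref{set-hQ} is well-defined; it is again a bounded convex subset of $\cl(\dom B)$. Applying Assumption \ref{ass0}(c) on $\hQ$ produces a constant $L_{F,\hQ}>0$ with $F$ being $L_{F,\hQ}$-Lipschitz continuous on $\hQ$. Repeating the calculation above with $\cQ$ replaced by $\hQ$ gives $\|F_k(x)-F_k(y)\|\le (L_{F,\hQ}+\rho_0^{-1})\|x-y\|$ for all $x,y\in\hQ$ and all $k\ge 0$, so we may take $L_\hQ=L_{F,\hQ}+\rho_0^{-1}$.

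The argument is essentially routine; the only subtlety, and the single point worth stating carefully, is the uniformity in $k$. The perturbation $F_k-F$ has Lipschitz constant exactly $\rho_k^{-1}$, and it is the monotone structure of $\rho_k$ (nondecreasing because $\zeta>1$) that lets the bound $\rho_k^{-1}\le \rho_0^{-1}$ absorb the $k$-dependence. Nothing else in the definition of $F_k$ varies with $k$ in a way that can affect Lipschitz constants, since $(x-z^k)/\rho_k$ depends on $z^k$ only through a constant shift. Thus no obstacle of substance arises beyond verifying this uniform bound.
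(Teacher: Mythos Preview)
Your proposal is correct and follows essentially the same approach as the paper: both obtain a Lipschitz constant for $F$ on the bounded convex set $\cQ$ via local Lipschitz continuity, then add the uniform bound $\rho_k^{-1}\le\rho_0^{-1}$ coming from the perturbation to get $L_\cQ=L_{F,\cQ}+\rho_0^{-1}$, and repeat for $\hQ$. The paper's proof is slightly terser but there is no substantive difference.
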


\begin{proof}
Notice that $\cQ$ is a bounded subset in $\dom{B}$. By this and the local Lipschitz continuity of $F$ on $\cl(\dom{B})$,  there exists some constant  $\tilde L_\cQ>0$ such that $F$ is $\tilde L_\cQ$-Lipschitz continuous on $\cQ$. In addition, notice from \aref{PPA} that  $\rho_k\geq\rho_0$ for all $k\geq 0$. Using these and \eqref{Fk}, we can easily see that $F_k$ is $L_\cQ$-Lipschitz continuous on $\cQ$ with $L_\cQ=\tilde L_\cQ+1/\rho_0$. Hence, statement (i) holds and moreover the set $\hQ$ is well-defined. By a similar argument, one can see that statement (ii) also holds.
\end{proof}

The next theorem presents iteration and operation complexity of \aref{PPA} for finding an $\varepsilon$-residual solution of problem \mref{MI} with $\mu=0$, whose proof is deferred to Section \ref{sec:proof}.

\begin{thm}\label{thm2}
Let $\gamma_0$, $\delta$, $\nu$, $\eta$, $\zeta$, $\sigma$, $\rho_0$, $\tau_0$ and $\varepsilon$ be given in \aref{PPA}, $L_\cQ$ and $L_\hQ$ be given in Lemma \ref{Fk-Lipschtiz}, $\xi$ and $\br_0$  be defined in  \eqref{xi} and \eqref{set-Q}, and 
\begin{align}
\Lambda & = \frac{\rho_0\tau_0}{1-\sigma\zeta}, \quad C_1 = \log\left(\frac{(\br_0+\Lambda)\left(8+12\gamma_0L_\cQ\right)}{3\tau_0\sqrt{1-2\nu^2}\min\left\{L_\hQ^{-1}\delta\xi,\gamma_0\right\}}\right),  \label{C1}\\
C_2 &=   \left\lceil\log\left(\frac{\xi}{\gamma_0 L_\hQ}\right)/\log\delta\right\rceil_+, \quad 
C_3 =  \ \frac{1}{{(\zeta-1)\log\left(1+\frac{2}{\rho_0(1-\eta)}\min\Big\{L_\hQ^{-1}\delta\xi, \gamma_0\Big\}\right)}}. \label{C2}
\end{align}
Suppose that $\mu=0$, i.e., $F+B$ is monotone but not strongly monotone on $\dom{B}$. Then \aref{PPA} terminates and outputs an $\varepsilon$-residual  solution in at most $K+1$ iterations. Moreover, the number of evaluations of $F$ and resolvent of $B$ performed in \aref{PPA}
 is no more than $\mm$, respectively, where
\begin{align}
K = \left\lceil\max\left\{\log_\zeta\left(\frac{2\br_0+2\Lambda}{\varepsilon\rho_0}\right),\frac{\log(2\tau_0/\varepsilon)}{\log(1/\sigma)}\right\}\right\rceil_+,\label{t2N}
\end{align}
and
\begin{align}\label{complexity}
\mm =& \ 8+C_2+(8+C_2)K  +4\zeta (C_1)_+C_3
 \max\left\{\frac{2\zeta(\br_0+\Lambda)}{\varepsilon\rho_0},\zeta\left(\frac{2\tau_0}{\varepsilon}\right)^{\frac{\log\zeta}{\log(1/\sigma)}},1\right\} \notag \\
& \ +4\zeta C_3 (\log \sigma^{-1})K 
 \max\left\{\frac{2\zeta(\br_0+\Lambda)}{\varepsilon\rho_0},\zeta\left(\frac{2\tau_0}{\varepsilon}\right)^{\frac{\log\zeta}{\log(1/\sigma)}},1\right\}.
\end{align}
\end{thm}

\begin{rem} \label{remark2}
(i) Since $1<\zeta<1/\sigma$ and $K=\oo{\log \varepsilon^{-1}}$, it can be seen from Theorem \ref{thm2} that \aref{PPA} enjoys 
an iteration complexity of $\oo{\log \varepsilon^{-1}}$ and an operation complexity of $\oo{\varepsilon^{-1}\log \varepsilon^{-1}}$ for finding an $\varepsilon$-residual solution of problem \eqref{MI} with $\mu=0$ under the assumption that $F$ is locally Lipschitz continuous on $\cl(\dom B)$. The latter complexity significantly improves the previously best operation complexity $\cO(\varepsilon^{-2})$ achieved by the FRBS method \cite{malitsky2020forward}.  In addition, notice that if $\gamma_0\geq\delta\xi/L_\hQ$, 
\[
\log\left(1+\frac{2}{\rho_0(1-\eta)}\min\Big\{L_\hQ^{-1}\delta\xi, \gamma_0\Big\}\right) \approx \frac{2\delta\xi}{\rho_0(1-\eta)} L_\hQ^{-1}.
\]
It then follows from \eqref{C2} and \eqref{complexity} that if $\gamma_0\geq\delta\xi/L_\hQ$, $\mm$ is roughly proportional to $L_\hQ$. Hence, $L_\hQ$ can be viewed as the ``Lipschitz constant'' of problem \mref{MI} with $\mu=0$.

(ii) Algorithm \ref{PPA} will become a globally convergent method if setting $\varepsilon=0$. Indeed, one can observe from Lemma \ref{ppal1} that the sequence $\{z^k\}$ generated by Algorithm \ref{PPA} with $\varepsilon=0$ satisfies $\|z^k- \left(I+\rho_k(F+B)\right)^{-1}(z^k)\| \leq \rho_k\tau_k$ for all $k\geq 0$, where $0<\rho_k \to \infty$ and $\sum_k  \rho_k\tau_k < \infty$. Besides, one can see from Lemma \ref{ppal2} that $\{z^k\}$ is bounded. It then follows from \cite[Theorem 1]{Rocka76} that the sequence $\{z^k\}$ converges to a solution of \eqref{MI}.

(iii) While Algorithm \ref{PPA} is proposed to solve problem \eqref{MI} with $\mu=0$, it is also applicable to \eqref{MI} with $\mu>0$. Similar to the proof of Theorem \ref{thm2}, it can be shown that Algorithm 2 achieves an operation complexity of $\cO((\log\varepsilon^{-1})^2)$ for finding and $\varepsilon$-residual solution of problem \eqref{MI} with $\mu>0$. This complexity is at most worse by a logarithmic factor compared to the complexity achieved by directly calling Algorithm \ref{ALG}.
\end{rem}

\section{Applications} \label{app}

In this section we study applications of our PD extrapolation method, particularly \aref{PPA}, for solving several important classes of problems, particularly, convex conic optimization, conic constrained saddle point, and variational inequality problems.  As a consequence, complexity results are obtained for finding an $\varepsilon$-KKT or $\varepsilon$-residual solution of these problems under local Lipschitz continuity for the first time.

\subsection{Convex conic optimization}
In this subsection we consider convex conic optimization 
\begin{equation}\label{conic-p}
\begin{array}{rl}
\min &  f(x) + P(x) \\
\mbox{s.t.} &  -g(x)\in \mcK, 
\end{array}
\end{equation}
where $f,P:\rr^n\to(-\infty,\infty]$ are proper closed convex functions, $\mcK$ is a closed convex cone in $\rr^m$, and the mapping   $g:\rr^n\to\rr^m$ is $\mcK$-convex, that is,
\begin{align}\label{conic-convex}
\vartheta g(x) + (1-\vartheta)g(y)-g(\vartheta x + (1-\vartheta)y) \in \mcK \qquad\forall x,y\in\rr^n, \; \vartheta\in[0,1].
\end{align}
It shall be mentioned that $\dom{P}$ is possibly \emph{unbounded}.

Problem \eqref{conic-p} includes a rich class of problems as special cases. For example, when $\mathcal{K} = \rr_{+}^{m_1}\times\{0\}^{m_2}$ for some $m_1$ and $m_2$, $g(x) = (g_1(x),\ldots,g_{m_1}(x), h_1(x),\ldots,h_{m_2}(x))^T$ with convex $g_i$'s and affine $h_j$'s, and $P(x)$ is the indicator function of a simple convex set $\cX\subseteq\rr^n$, problem \eqref{conic-p} reduces to an ordinary convex optimization problem
\[
\begin{array}{rl}
\min\limits_{x \in \cX} \{f(x): g_i(x) \leq 0, \ i=1,\ldots,m_1; h_j(x) = 0, \ j=1,\ldots,m_2\}.
\end{array}
\]

We make the following additional assumptions for problem \eqref{conic-p}.
\begin{ass} \label{assump-cvx}
\begin{enumerate}[label=(\alph*)]
\item The proximal operator associated with $P$ and also the projection onto $\mcK^*$ can be exactly evaluated.
\item The function $f$ and the mapping  $g$ are differentiable on $\cl(\dom{\partial P})$. Moreover, $\nabla f$ and $\nabla g$  are locally Lipschitz continuous on $\cl(\dom{\partial P})$.
\item Both problem \mref{conic-p} and its Lagrangian dual problem
\begin{align}\label{conic-d}
\sup_{\lambda\in\mcK^*}\inf_{x}\left\{f(x)+P(x)+\langle\lambda,g(x)\rangle\right\}
\end{align}
have optimal solutions, and moreover, they share the same optimal value. 
\end{enumerate}
\end{ass}

Under the above assumptions, it can be shown that $(x,\lambda)$ is a pair of optimal solutions of \mref{conic-p} and \mref{conic-d} if and only if it satisfies the Karush-Kuhn-Tucker (KKT) condition
\begin{align} \label{KKT}
0 \in \begin{pmatrix}
\nabla f(x)+\nabla g(x)\lambda+\partial P(x)\\
-g(x)+\cN_{\mcK^*}(\lambda)
\end{pmatrix}. 
\end{align}
In general, it is difficult to find an exact optimal solution of \mref{conic-p} and \mref{conic-d}. Instead, for any given $\varepsilon>0$, we are interested in finding a pair of $\varepsilon$-KKT solutions $(x,\lambda)$ of \mref{conic-p} and \mref{conic-d} that satisfies
\begin{align} \label{eps-KKT}
\dist(0, \nabla f(x)+\nabla g(x)\lambda+\partial P(x)) \le \varepsilon, \quad 
\dist(0,-g(x)+\cN_{\mcK^*}(\lambda)) \le \varepsilon.
\end{align}

Observe from \eqref{KKT} that problems \mref{conic-p} and \mref{conic-d} can be solved as the MI problem
\begin{align}\label{conic-inclusion}
0\in F(x,\lambda)+B(x,\lambda),
\end{align}
where 
\begin{align}\label{conic-def1}
F(x,\lambda)=\begin{pmatrix}
\nabla f(x)+\nabla g(x)\lambda\\
-g(x)\end{pmatrix},\quad
B(x,\lambda)=\begin{pmatrix}
\partial P(x)\\
\cN_{\mcK^*}(\lambda)
\end{pmatrix}.
\end{align}
Notice that $\lambda\in\mcK^*$ and $g$ is $\mcK$-convex in the sense that \eqref{conic-convex} holds, which imply that $\langle\lambda,g(x)\rangle$ is convex in $x$. Based on this and the above assumptions, one can observe that $f(x)+\langle\lambda,g(x)\rangle$ is convex in $x$ and concave in $\lambda$ on $\cl(\dom B)$, which implies that $F$ is monotone on $\cl(\dom B)$.  One can also observe that $F$ is locally Lipschitz continuous on $\cl(\dom B)$ and $B$ is maximal monotone.  As a result,  \aref{PPA} can be suitably applied to the MI problem \eqref{conic-inclusion}. It then follows from Theorem \ref{thm2} that \aref{PPA}, when applied to problem \eqref{conic-inclusion}, finds an $\varepsilon$-residual solution $(x,\lambda)$ of  \eqref{conic-inclusion} within  $\oo{\varepsilon^{-1}\log{\varepsilon^{-1}}}$ evaluations of $F$ and resolvent of $B$. Notice from \eqref{eps-KKT} and \eqref{conic-def1} that such $(x,\lambda)$ is also a pair of $\varepsilon$-KKT solutions of \mref{conic-p} and \mref{conic-d}. In addition, the evaluation of $F$ requires that of $\nabla f$ and $\nabla g$, and also the resolvent of $B$ can be computed as
\begin{align*}
(I+\gamma B)^{-1}\begin{pmatrix}
x\\
\lambda\end{pmatrix}=\begin{pmatrix}
\mathrm{prox}_{\gamma P}(x)\\
\Pi_{\mcK^*}(\lambda)
\end{pmatrix} \qquad \forall (x,\lambda) \in \rr^n \times \rr^m, \gamma>0. 
\end{align*}

The above discussion leads to the following result regarding \aref{PPA} for finding a pair of $\varepsilon$-KKT solutions of problems \mref{conic-p} and \mref{conic-d}.

\begin{thm}
For any $\varepsilon>0$, \aref{PPA}, when applied to the MI problem \eqref{conic-inclusion},  outputs a pair of $\varepsilon$-KKT solutions of problems \mref{conic-p} and \mref{conic-d} within $\oo{\varepsilon^{-1}\log{\varepsilon^{-1}}}$ evaluations of $\nabla f$, $\nabla g$, $\prox_{\gamma P}$ and $\Pi_{\mcK^*}$ for some $\gamma>0$.
\end{thm}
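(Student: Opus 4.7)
The plan is to recognize the theorem as a direct consequence of \tref{thm2} applied to the specific MI problem \eqref{conic-inclusion}, combined with a translation between its residual and the $\varepsilon$-KKT condition \eqref{eps-KKT}. First I would verify that the operator pair $(F,B)$ defined in \eqref{conic-def1} satisfies Assumption \ref{ass0} with $\mu=0$, so that \aref{PPA} is applicable and \tref{thm2} is in force. Existence of a solution to $0\in(F+B)(x,\lambda)$ follows from Assumption \ref{assump-cvx}(c) and standard Lagrangian duality, since any primal-dual optimal pair satisfies the KKT system \eqref{KKT}. Maximal monotonicity of $B=\partial P\times\cN_{\mcK^*}$ is a standard product-of-maximal-monotone argument, and exact evaluation of the resolvent of $\gamma B$ is guaranteed by Assumption \ref{assump-cvx}(a) together with the product formula $(I+\gamma B)^{-1}(x,\lambda)=(\prox_{\gamma P}(x),\Pi_{\mcK^*}(\lambda))$.

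For monotonicity of $F$, which I expect to be the chief technical step, I would expand
\[
\langle F(x,\lambda)-F(y,\eta),(x,\lambda)-(y,\eta)\rangle=\langle\nabla f(x)-\nabla f(y),x-y\rangle+\mathcal{A}+\mathcal{B},
\]
where $\mathcal{A}=\langle\nabla g(x)\lambda,x-y\rangle-\langle\lambda,g(x)-g(y)\rangle$ and $\mathcal{B}=\langle\nabla g(y)\eta,y-x\rangle-\langle\eta,g(y)-g(x)\rangle$. The first term is nonnegative by convexity of $f$. The $\mcK$-convexity \eqref{conic-convex} of $g$ implies that $x\mapsto\langle\lambda,g(x)\rangle$ and $x\mapsto\langle\eta,g(x)\rangle$ are convex real-valued functions whenever $\lambda,\eta\in\mcK^*$, and the gradient inequality for these convex functions immediately yields $\mathcal{A}\geq 0$ and $\mathcal{B}\geq 0$. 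Hence $F$ is monotone on $\dom{B}$ with parameter $\mu=0$. Local Lipschitz continuity of $F$ on $\cl(\dom{B})=\cl(\dom{\partial P})\times\mcK^*$ then follows from Assumption \ref{assump-cvx}(b): the splitting
\[
\|\nabla g(x)\lambda-\nabla g(y)\eta\|\leq\|\nabla g(x)\|\,\|\lambda-\eta\|+\|\eta\|\,\|\nabla g(x)-\nabla g(y)\|
\]
together with the local boundedness of $\nabla g$ reduces everything to the assumed local Lipschitz continuity of $\nabla f$ and $\nabla g$.

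With Assumption \ref{ass0} in place, \tref{thm2} guarantees that \aref{PPA}, applied to \eqref{conic-inclusion}, outputs some $(\tilde x,\tilde\lambda)\in\dom{B}$ with $\res_{F+B}(\tilde x,\tilde\lambda)\leq\varepsilon$ using no more than $\oo{\varepsilon^{-1}\log\varepsilon^{-1}}$ evaluations of $F$ and of resolvent of $B$. Because $(F+B)(\tilde x,\tilde\lambda)$ has the product structure $(\nabla f(\tilde x)+\nabla g(\tilde x)\tilde\lambda+\partial P(\tilde x))\times(-g(\tilde x)+\cN_{\mcK^*}(\tilde\lambda))$, its minimum-norm element $(u^*,v^*)$ satisfies $\|u^*\|^2+\|v^*\|^2\leq\varepsilon^2$, hence both $\|u^*\|\leq\varepsilon$ and $\|v^*\|\leq\varepsilon$; these are precisely the two distance inequalities in \eqref{eps-KKT}, so $(\tilde x,\tilde\lambda)$ is a pair of $\varepsilon$-KKT solutions of \eqref{conic-p} and \eqref{conic-d}. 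To convert the operation count to the stated form, I would note that each evaluation of $F$ costs one evaluation of $\nabla f$ and one of $\nabla g$, while each evaluation of $(I+\gamma B)^{-1}$ costs one evaluation of $\prox_{\gamma P}$ and one of $\Pi_{\mcK^*}$, so the bound $\oo{\varepsilon^{-1}\log\varepsilon^{-1}}$ carries over verbatim to the number of evaluations of $\nabla f$, $\nabla g$, $\prox_{\gamma P}$, and $\Pi_{\mcK^*}$. The residual-to-KKT translation and the operation counting are routine bookkeeping; the main obstacle in the proof remains the monotonicity computation for $F$, which hinges essentially on the $\mcK$-convexity of $g$.
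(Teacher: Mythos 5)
Your proposal is correct and follows essentially the same route as the paper, which treats \eqref{conic-inclusion} via Theorem \ref{thm2} after observing that $F$ is monotone and locally Lipschitz on $\cl(\dom B)$, $B$ is maximal monotone, and the resolvent of $\gamma B$ splits into $\prox_{\gamma P}$ and $\Pi_{\mcK^*}$. The only difference is that you spell out the monotonicity of $F$ via $\mcK$-convexity and the residual-to-KKT translation, steps the paper leaves as direct observations.
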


\begin{rem}
\bi
\item[(i)] This is the first time to propose an algorithm for finding an $\varepsilon$-KKT solution of problem \mref{conic-p} without the usual assumption that $\nabla f$ and $\nabla g$ are Lipschitz continuous and/or the domain of $P$ is bounded. Moreover, the proposed algorithm is equipped with a verifiable termination criterion and enjoys an operation complexity of $\oo{\varepsilon^{-1}\log{\varepsilon^{-1}}}$. 
\item[(ii)] A first-order augmented Lagrangian method was recently proposed in \cite{lu2023iteration} for finding a pair of $\varepsilon$-KKT solutions of a subclass of problems \mref{conic-p} and \mref{conic-d}, which also requires $\oo{\varepsilon^{-1}\log{\varepsilon^{-1}}}$ evaluations of $\nabla f$, $\nabla g$, $\prox_{\gamma P}$ and $\Pi_{\mcK^*}$. However,  this method and its complexity analysis require that $\nabla f$ and $\nabla g$ be Lipschitz continuous on an open set containing $\dom{P}$ and also that $\dom{P}$ be bounded. As a result, it is generally not applicable to problem \mref{conic-p}.   
\item[(iii)] A variant of Tseng's MFBS method was proposed in \cite[Section 6]{monteiro2011complexity} for finding a pair of $\varepsilon$-KKT solutions of a special class of problems \mref{conic-p} and \mref{conic-d}, where $g$ is an affine mapping, $\mcK=\{0\}^m$, and $\nabla f$ is Lipschitz continuous on $\cl(\dom{P})$. 
Due to the latter assumption, this method is generally not applicable to problem \mref{conic-p}. Additionally, this method has an operation complexity of $\oo{\varepsilon^{-2}}$ (see  \cite[Theorem 6.3]{monteiro2011complexity}). In contrast, our method achieves a significantly better operation complexity of 
of $\oo{\varepsilon^{-1}\log{\varepsilon^{-1}}}$. Furthermore, an adaptive proximal algorithm was recently proposed in \cite[Section 3.1]{latafat2023adaptive} for solving a special case of problem \eqref{conic-p} where $g$ is an affine mapping. It has been shown in \cite[Theorem3.4]{latafat2023adaptive} that the iterates of this algorithm converges to a KKT solution of the problem.
\ei
\end{rem}

\subsection{Conic constrained saddle point problems}\label{sec:ccsp}

In this subsection we consider the following conic constrained saddle point (CCSP) problem:  
\begin{align}\label{CCSP}
\min_{-g(x)\in\mcK}\max_{-\tilde g(y)\in\tilde{\mcK}} \{\Psi(x,y) := f(x,y)+P(x)-\tilde P(y)\},
\end{align}
where $f:\rr^n\times\rr^m\to[-\infty,\infty]$ is convex in $x$ and concave in $y$, $P:\rr^n\to(-\infty,\infty]$ and $\tilde P:\rr^m\to(-\infty,\infty]$ are proper closed convex functions, $\mcK\subseteq\rr^p$ and $\tilde{\mcK}\subseteq\rr^{\tilde p}$ are closed convex cones, and 
$g$ and $\tilde g$ are $\mcK$- and $\tilde{\mcK}$-convex in the sense of \mref{conic-convex}, respectively. It shall be mentioned that $\dom{P}$ and $\dom{\tilde P}$ are possibly \emph{unbounded}.

We make the following additional assumptions for problem \eqref{CCSP}.
\begin{ass}
\begin{enumerate}[label=(\alph*)]
\item The proximal operator associated with $P$ and $\tilde P$ and also the projection onto $\mcK^*$ and $\tilde{\mcK}^*$ can be exactly evaluated. 
\item The function $f$ is differentiable on $\cl(\dom{\partial P}) \times \cl(\dom{\partial \tilde P})$. Moreover, $\nabla f$ is locally Lipschitz continuous on $\cl(\dom{\partial P}) \times \cl(\dom{\partial \tilde P})$.
\item The mappings $g$ and $\tilde g$ are respectively differentiable on $\cl(\dom{\partial P})$ and $\cl(\dom{\partial \tilde P})$. Moreover, $\nabla g$ and $\nabla \tilde g$ are locally Lipschitz continuous on $\cl(\dom{\partial P})$ and $\cl(\dom{\partial \tilde P})$, respectively. 
\item There exists a pair $(x^*,y^*)\in\dom{P}\times\dom{\tilde P}$ satisfying $-g(x^*)\in\mcK$ and $-\tilde g(y^*)\in\tilde{\mcK}$ such that
\begin{align*}
\Psi(x^*,y)\leq\Psi(x^*,y^*)\leq\Psi(x,y^*)
\end{align*}
holds for any $(x,y)\in\dom{P}\times\dom{\tilde P}$ satisfying $-g(x)\in\mcK$ and $-\tilde g(y)\in\tilde{\mcK}$.
\end{enumerate}
\end{ass}

Problem \eqref{CCSP} includes a rich class of saddle point problems as special cases. Several of them have been studied in the literature. For example,  extragradient method \cite{Kor76},  mirror-prox method \cite{Nem05}, dual extrapolation method \cite{Nest03-3}, and accelerated proximal point method \cite{Lin20} were developed for solving the special CCSP problem
\beq \label{CSP1}
\min\limits_{x\in \cX} \max\limits_{y\in\cY} \tf(x,y),
\eeq
where $\tf$ is convex in $x$ and concave in $y$ with \emph{Lipschitz continuous} gradient on $\cX \times \cY$, and $\cX$ and $\cY$ are simple convex sets. Also, optimistic gradient method \cite{Mok20-2} and extra anchored gradient method \cite{pmlr-v139-yoon21d} were proposed for solving problem \eqref{CSP1} with $\cX=\rr^n$ and $\cY=\rr^m$. In addition, accelerated proximal gradient method \cite{Tse08}, a variant of MFBS method \cite{monteiro2011complexity}, and also generalized extragradient method \cite{monteiro2011complexity} were proposed for solving the special CCSP problem
\beq \label{CSP2}
\min\limits_{x} \max\limits_{y} \left\{\tf(x,y)+P(x)-\tP(y)\right\},
\eeq
where $\tf$ is convex in $x$ and concave in $y$ with \emph{Lipschitz} continuous gradient on $\dom{P} \times \dom{\tP}$. Besides,  several optimal or nearly optimal first-order methods were developed for solving problem \eqref{CSP1} or \eqref{CSP2} with a strongly-convex-(strongly)-concave $\tf$ (e.g., see \cite{Lin20,Yang20,Tom21}). Recently, extra-gradient method
of multipliers \cite{zhang2023primal} was proposed for solving a special case of problem \mref{CCSP} with $g$ and $\tilde g$ being an affine mapping, $\dom{P}$ and $\dom{\tilde P}$ being compact, and $\nabla f$ being Lipschitz continuous on $\dom{P} \times \dom{\tilde P}$. 
Iteration complexity of these methods except \cite{pmlr-v139-yoon21d} was established based on the duality gap on the ergodic (i.e., weight-averaged) solution sequence. Yet, the duality gap can often be difficult to measure. In practice, one may use a computable upper bound on the duality gap to terminate these methods, which however typically requires the knowledge of an upper bound on the distance between the initial point and the solution set.  
Besides, there is a lack of complexity guarantees for these methods in terms of the original solution sequence. 

Due to the sophistication of the constraints $-g(x) \in \mcK$ and $-\tilde g(y) \in \tilde{\mcK}$ and also the local Lipschitz continuity of $\nabla f$, $\nabla g$ and $\nabla \tilde g$,  the aforementioned methods  \cite{Kor76,Nest03-3,Nem05,Tse08,monteiro2011complexity,Mok20-2,Lin20,Yang20,Tom21,pmlr-v139-yoon21d} are generally not suitable for solving the CCSP problem \eqref{CCSP}.  We next apply our \aref{PPA} to find an $\varepsilon$-KKT solution of \eqref{CCSP} and also study its operation complexity for finding such an approximate solution under the local Lipschitz continuity of $\nabla f$, $\nabla g$ and $\nabla \tilde g$.

Under the above assumptions, it can be shown that $(x,y)$ is a pair of optimal minmax solutions of problem \mref{CCSP} if and only if it together with some $(\lambda, \tilde \lambda)$ satisfies the KKT condition
\begin{align} \label{KKT-CSP}
0\in\begin{pmatrix}
\nabla_xf(x,y)+\nabla g(x)\lambda+\partial P(x)\\
-\nabla_yf(x,y)+\nabla \tilde g(y)\tilde{\lambda}+\partial \tilde P(y)\\
-g(x)+\cN_{\mcK^*}(\lambda)\\
-\tilde g(y)+\cN_{\tilde{\mcK}^*}(\tilde{\lambda})
\end{pmatrix}.
\end{align}
Generally, it is difficult to find a pair of exact optimal minimax solutions of \mref{CCSP}. Instead, for any given $\varepsilon>0$, we are interested in finding an $\varepsilon$-KKT solution $(x,y,\lambda,\tilde{\lambda})$ of \eqref{CCSP} that satisfies
\begin{align} 
 \dist(0, \nabla_xf(x,y)+\nabla g(x)\lambda+\partial P(x)) \le \varepsilon, & \quad 
\dist(0,-\nabla_yf(x,y)+\nabla \tilde g(y)\tilde{\lambda}+\partial \tilde P(y)) \le \varepsilon, \label{eps-KKT1}\\
 \dist(0,-g(x)+\cN_{\mcK^*}(\lambda)) \le \varepsilon, & \quad \dist(0,-\tilde g(y)+\cN_{\tilde{\mcK}^*}(\tilde{\lambda})) \le \varepsilon. \label{eps-KKT2}
\end{align}

Observe from \eqref{KKT-CSP} that problem \mref{CCSP} can be solved as the MI problem
\begin{align}\label{CSP-inclusion}
0\in F(x,y,\lambda,\tilde{\lambda})+B(x,y,\lambda,\tilde{\lambda}),
\end{align}
where 
\begin{align}\label{conic-def2}
F(x,y,\lambda,\tilde{\lambda})=\begin{pmatrix}
\nabla_xf(x,y)+\nabla g(x)\lambda\\
-\nabla_yf(x,y)+\nabla \tilde g(y)\tilde{\lambda}\\
-g(x)\\
-\tilde g(y)
\end{pmatrix},\quad
B(x,y,\lambda,\tilde{\lambda})=\begin{pmatrix}
\partial P(x)\\
\partial \tilde P(y)\\
\cN_{\mcK^*}(\lambda)\\
\cN_{\tilde{\mcK}^*}(\tilde{\lambda})
\end{pmatrix}.
\end{align}
Notice that $\lambda\in\mcK^*$, $\tilde\lambda\in\tilde\mcK^*$, and $g$ and $\tilde g$ are respectively $\mcK$- and $\tilde\mcK$-convex in the sense of \eqref{conic-convex}, which imply that $\langle\lambda,g(x)\rangle$ and $\langle\tilde\lambda,\tilde g(y)\rangle$ are convex in $x$ and $y$, respectively. Based on this and the above assumptions, one can observe that $f(x,y)+\langle\lambda,g(x)\rangle-\langle\tilde{\lambda},\tilde g(y)\rangle$ is convex in $(x,\tilde{\lambda})$ and concave in $(y,\lambda)$ on $\cl(\dom B)$, which implies that $F$ is monotone on $\cl(\dom B)$.  One can also observe that $F$ is locally Lipschitz continuous on $\cl(\dom B)$ and $B$ is maximal monotone.  As a result,  \aref{PPA} can be suitably applied to the MI problem \eqref{CSP-inclusion}. It then follows from Theorem \ref{thm2} that \aref{PPA}, when applied to problem \eqref{CSP-inclusion},  finds an $\varepsilon$-residual solution $(x,y,\lambda,\tilde{\lambda})$ of \eqref{CSP-inclusion} within $\oo{\varepsilon^{-1}\log{\varepsilon^{-1}}}$ evaluations of $F$ and resolvent of $B$. Notice from \mref{eps-KKT1}, \mref{eps-KKT2} and \eqref{conic-def2} that such $(x,y,\lambda,\tilde{\lambda})$ is also an $\varepsilon$-KKT solution of problem \eqref{CCSP}. In addition, the evaluation of $F$ requires that of $\nabla f$, $\nabla g$ and $\nabla \tilde g$, and also the resolvent of $B$ can be computed as
\begin{align*}
(I+\gamma B)^{-1}\begin{pmatrix}
x\\
y\\
\lambda\\
\tilde{\lambda}
\end{pmatrix}=\begin{pmatrix}
\mathrm{prox}_{\gamma P}(x)\\
\mathrm{prox}_{\gamma \tilde P}(y)\\
\Pi_{\mcK^*}(\lambda)\\
\Pi_{\tilde{\mcK}^*}(\tilde{\lambda})
\end{pmatrix} \qquad \forall (x,y,\lambda,\tilde{\lambda}) \in \rr^n \times \rr^m \times\rr^p\times  \rr^{\tilde p}, \gamma>0. 
\end{align*}

The above discussion leads to the following result regarding \aref{PPA} for finding an $\varepsilon$-KKT solution of problem \eqref{CCSP}.

\begin{thm}
For any $\varepsilon>0$, \aref{PPA}, when applied to the MI problem \eqref{CSP-inclusion},  outputs an $\varepsilon$-KKT solution of problem \mref{CCSP} within $\oo{\varepsilon^{-1}\log{\varepsilon^{-1}}}$ evaluations of $\nabla f$, $\nabla g$, $\nabla \tilde g$, $\prox_{\gamma P}$, $\prox_{\gamma \tilde P}$, $\Pi_{\mcK^*}$ and $\Pi_{\tilde \mcK^*}$ for some $\gamma>0$.
\end{thm}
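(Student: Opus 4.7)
The plan is to show that Algorithm \ref{PPA}, applied to \eqref{CSP-inclusion} with the operators $F$ and $B$ defined in \eqref{conic-def2}, satisfies Assumption \ref{ass0} with $\mu=0$, and then invoke Theorem \ref{thm2} to conclude the complexity bound; finally, we will verify that an $\varepsilon$-residual solution of \eqref{CSP-inclusion} is automatically an $\varepsilon$-KKT solution of \eqref{CCSP}, and count elementary operations.

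First, I will verify Assumption \ref{ass0} for the pair $(F,B)$ from \eqref{conic-def2} with $\mu=0$. Existence of a solution follows from Assumption 3(d), which guarantees a saddle point $(x^*,y^*)$ and hence, via standard Lagrangian duality under the stated constraint qualifications, a tuple $(x^*,y^*,\lambda^*,\tilde\lambda^*)$ satisfying \eqref{KKT-CSP}. The operator $B$ is clearly maximal monotone as the product of subdifferentials of proper closed convex functions and normal cones to closed convex cones. For monotonicity of $F+B$, I will use the fact that the Lagrangian
\[
\Phi(x,y,\lambda,\tilde\lambda) := f(x,y)+\langle\lambda,g(x)\rangle-\langle\tilde\lambda,\tilde g(y)\rangle
\]
is convex in $(x,\tilde\lambda)$ and concave in $(y,\lambda)$ on $\dom P\times\dom \tilde P\times\mcK^*\times\tilde\mcK^*$ (using $\mcK$- and $\tilde\mcK$-convexity of $g,\tilde g$ and convex-concavity of $f$), so the associated saddle-operator $F$ is monotone there; the normal-cone components in $B$ confine $(\lambda,\tilde\lambda)$ to $\mcK^*\times\tilde\mcK^*$, and a standard computation shows $F+B$ is monotone on $\dom B$. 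Local Lipschitz continuity of $F$ on $\cl(\dom B)$ follows from Assumptions 3(b)-(c) together with the product-rule identity for the $\nabla g(x)\lambda$ and $\nabla\tilde g(y)\tilde\lambda$ blocks on bounded subsets. The resolvent of $B$ decouples into $\prox_{\gamma P}$, $\prox_{\gamma\tilde P}$, $\Pi_{\mcK^*}$, and $\Pi_{\tilde\mcK^*}$, all available by Assumption 3(a).

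Next, I will apply Theorem \ref{thm2} to \eqref{CSP-inclusion}. It yields an $\varepsilon$-residual solution $(x,y,\lambda,\tilde\lambda)\in\dom B$ of \eqref{CSP-inclusion}, using at most $\oo{\varepsilon^{-1}\log\varepsilon^{-1}}$ evaluations of $F$ and of the resolvent of $B$. To pass from $\res_{F+B}(x,y,\lambda,\tilde\lambda)\le\varepsilon$ to the four KKT inequalities \eqref{eps-KKT1}-\eqref{eps-KKT2}, I will pick $v=(v_1,v_2,v_3,v_4)\in(F+B)(x,y,\lambda,\tilde\lambda)$ with $\|v\|\le\varepsilon$; by the block structure of $F$ and $B$ in \eqref{conic-def2}, the components satisfy $v_1\in\nabla_x f(x,y)+\nabla g(x)\lambda+\partial P(x)$, etc., and each $\|v_i\|\le\|v\|\le\varepsilon$, which yields precisely \eqref{eps-KKT1}-\eqref{eps-KKT2} and hence an $\varepsilon$-KKT solution of \eqref{CCSP}.

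Finally, I will count operations: each evaluation of $F$ requires one evaluation each of $\nabla f$, $\nabla g$, $\nabla\tilde g$ (with one matrix-vector product each for the $\nabla g(x)\lambda$ and $\nabla\tilde g(y)\tilde\lambda$ terms, treated as part of the $\nabla g$, $\nabla\tilde g$ evaluations), and each evaluation of $(I+\gamma B)^{-1}$ requires one call each to $\prox_{\gamma P}$, $\prox_{\gamma\tilde P}$, $\Pi_{\mcK^*}$, $\Pi_{\tilde\mcK^*}$. Multiplying by the $\oo{\varepsilon^{-1}\log\varepsilon^{-1}}$ bound from Theorem \ref{thm2} gives the stated complexity. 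The main obstacle is the monotonicity verification in the first paragraph: the cleanest way is probably to write $F$ as the sum of the saddle-operator of $\Phi$ plus a skew-symmetric coupling piece that drops out in the monotonicity inner product, but one must handle the domain restrictions $\lambda\in\mcK^*$ and $\tilde\lambda\in\tilde\mcK^*$ carefully via the presence of $\cN_{\mcK^*}$ and $\cN_{\tilde\mcK^*}$ in $B$; everything else is bookkeeping.
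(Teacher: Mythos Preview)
Your proposal is correct and follows the same route as the paper: the paper's argument is contained in the discussion paragraph preceding the theorem, where it verifies that $F$ in \eqref{conic-def2} is monotone and locally Lipschitz on $\cl(\dom B)$, that $B$ is maximal monotone with resolvent given by the displayed proximal/projection formula, applies Theorem \ref{thm2} to obtain an $\varepsilon$-residual solution of \eqref{CSP-inclusion}, and then reads off \eqref{eps-KKT1}--\eqref{eps-KKT2} from the block structure. Your plan simply fleshes out each of these steps (in particular the monotonicity via the convex--concave Lagrangian $\Phi$, with the grouping $(x,\tilde\lambda)$ versus $(y,\lambda)$, which is the correct one) and counts operations the same way.
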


\begin{rem}
This is the first time to propose an algorithm for finding an $\varepsilon$-KKT solution of problem \mref{CCSP}. Moreover, the proposed algorithm is equipped with a verifiable termination criterion and enjoys an operation complexity of $\oo{\varepsilon^{-1}\log{\varepsilon^{-1}}}$ without the usual assumption that $\nabla f$ is Lipschitz continuous and/or the domains $P$ and $\tilde P$ are bounded. 
\end{rem}

\subsection{Variational inequality}
In this subsection we consider the following variational inequality (VI) problem: 
\begin{align}\label{HVI}
\mbox{find }x\in\rr^n \mbox{ such that}\ g(y)-g(x)+\langle y-x,F(x)\rangle\geq0\ \forall y\in\rr^n,
\end{align}
where $g:\rr^{n}\to (-\infty,\infty]$ is a proper closed  convex function, and $F:\dom{F}\to\rr^n$ is monotone and \emph{locally Lipschitz continuous} on $\mathrm{cl}(\dom{\partial g})\subseteq\dom{F}$. It shall be mentioned that $\dom{g}$ is possibly \emph{unbounded}. Assume that problem \eqref{HVI} has at least one solution. 
For the details of VI and its applications, we refer the reader to \cite{FaPa07} and
the references therein.

Recently, an adaptive golden ratio algorithm was proposed in \cite[Algorithm 1]{malitsky2020golden} for solving \eqref{HVI}. In addition, some special cases of \mref{HVI} have been well studied in the literature. For example,  projection method \cite{Sib70}, extragradient method \cite{Kor76},  mirror-prox method \cite{Nem05}, dual extrapolation method \cite{Nest03-3},  operator extrapolation method \cite{kotsalis2020simple}, extra-point method \cite{huang2021unifying, huang2022new}, and extra-momentum method \cite{huang2022new} were developed for solving problem \mref{HVI} with $g$ being the indicator function of a closed convex set and $F$ being \emph{Lipschitz continuous} on it or the entire space. In addition, a variant of Tseng's MFBS method \cite{monteiro2011complexity}, and generalized extragradient method \cite{monteiro2011complexity} were proposed for solving 
problem \mref{HVI} with $F$ being \emph{Lipschitz continuous}. 
Iteration complexity of these methods except \cite{kotsalis2020simple} was established based on the weak gap or its variant on the ergodic (i.e., weight-averaged) solution sequence. Yet, the weak gap can often be difficult to measure. In practice, one may use a computable upper bound on the weak gap to terminate these methods, which however typically requires the knowledge of an upper bound on the distance between the initial point and the solution set.  
Besides, there is a lack of complexity guarantees for these methods in terms of the original solution sequence. In addition, since $F$ is only assumed to be \emph{locally Lipschitz continuous} on $\mathrm{cl}(\dom{g})$ in our paper, these methods are generally not suitable for solving problem \mref{HVI}. 

Generally, it is difficult to find an exact solution of problem \mref{HVI}. Instead, for any given $\varepsilon>0$, we are interested in finding an $\varepsilon$-residual solution of \eqref{HVI}, which is a point $x$ satisfying $\res_{F+\partial g}(x) \le \varepsilon$.
To this end, we first observe that problem \mref{HVI} is equivalent to the MI problem
\begin{align}\label{HVI-inclusion}
0\in (F+\partial g)(x).
\end{align}
Since $F$ is monotone and locally Lipschitz continuous on $\cl(\dom{\partial g})$ and $\partial g$ is maximal monotone,  \aref{PPA} can be suitably applied to the MI problem \eqref{HVI-inclusion}. It then follows from Theorem \ref{thm2} that \aref{PPA}, when applied to problem \eqref{HVI-inclusion},  finds an $\varepsilon$-residual solution $x$ of \eqref{HVI-inclusion}, which is indeed also an $\varepsilon$-residual solution of \eqref{HVI}, within $\oo{\varepsilon^{-1}\log{\varepsilon^{-1}}}$ evaluations of $F$ and resolvent of $\partial g$. Notice that the resolvent of $\partial g$ can be computed as
\begin{align*}
(I+\gamma \partial g)^{-1}(x)=\mathrm{prox}_{\gamma g}(x),\quad\forall x\in\rr^n,\gamma>0.
\end{align*}

The above discussion leads to the following result regarding \aref{PPA} for finding an $\varepsilon$-residual solution of problem \eqref{HVI}.

\begin{thm}
For any $\varepsilon>0$,  \aref{PPA}, when applied to the MI problem \eqref{HVI-inclusion},  outputs an $\varepsilon$-residual solution of problem \mref{HVI} within $\oo{\varepsilon^{-1}\log{\varepsilon^{-1}}}$ evaluations of $F$ and $\prox_{\gamma g}$ for some $\gamma>0$.
\end{thm}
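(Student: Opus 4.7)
The plan is to recognize this theorem as essentially a direct specialization of \tref{thm2} to the MI problem \eqref{HVI-inclusion} arising from the VI reformulation. First, I would verify the equivalence between \eqref{HVI} and \eqref{HVI-inclusion}: by the standard subgradient characterization, $x$ solves \eqref{HVI} if and only if $-F(x)\in \partial g(x)$, i.e., $0\in(F+\partial g)(x)$. This also shows that any $\varepsilon$-residual solution of \eqref{HVI-inclusion} is automatically an $\varepsilon$-residual solution of \eqref{HVI} in the sense defined above.

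Next, I would check that \aref{PPA} is applicable to \eqref{HVI-inclusion} by verifying \aref{ass0} with $B:=\partial g$ and $F$ as given. Item (a) is the standing solvability hypothesis on \eqref{HVI}. Since $g$ is proper, closed, and convex, $\partial g$ is maximal monotone with convex domain $\dom{\partial g}$, and its resolvent coincides with $\prox_{\gamma g}$, which is assumed to be exactly computable; this covers (d). For (b), the monotonicity of $F$ together with that of $\partial g$ yields monotonicity of $F+\partial g$ with parameter $\mu=0$. For (c), the local Lipschitz continuity of $F$ on $\cl(\dom{\partial g})$ is exactly the hypothesis imposed in this subsection.

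Having these verifications in place, I would simply invoke \tref{thm2} with $\mu=0$. This yields that \aref{PPA} terminates in at most $K+1$ iterations and outputs an $\varepsilon$-residual solution of \eqref{HVI-inclusion}, performing at most $M=\oo{\varepsilon^{-1}\log \varepsilon^{-1}}$ evaluations of $F$ and the resolvent of $\partial g$. Translating resolvent evaluations into evaluations of $\prox_{\gamma g}$ for the chosen $\gamma>0$ (as noted in the passage right before the theorem) delivers the desired complexity statement.

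There is no substantive obstacle beyond this reduction; the only care required is bookkeeping, namely confirming that \aref{ass0} is discharged by the VI hypotheses in this subsection and that the residual notions on the MI side and the VI side coincide. All the heavy lifting has been absorbed into \tref{thm2}.
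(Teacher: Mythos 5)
Your proposal is correct and follows essentially the same route as the paper: the paper's justification is precisely the discussion preceding the theorem, which reformulates \eqref{HVI} as the MI problem \eqref{HVI-inclusion}, checks that $F$ is monotone and locally Lipschitz continuous on $\cl(\dom{\partial g})$ and that $\partial g$ is maximal monotone with resolvent $\prox_{\gamma g}$, and then invokes \tref{thm2}. Your additional bookkeeping (verifying Assumption \ref{ass0} item by item and noting that the residual notions on the MI and VI sides coincide) is consistent with, and slightly more explicit than, the paper's argument.
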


\begin{rem}
An adaptive golden ratio algorithm was recently proposed in \cite[Algorithm 1]{malitsky2020golden}. While \cite{malitsky2020golden} did not specifically study the operation complexity of this algorithm for finding an $\varepsilon$-residual solution of \eqref{HVI}, it can be shown that the algorithm achieves an operation complexity of $\cO(\varepsilon^{-2})$ for such a solution by using \cite[equation (34) and Lemma 2]{malitsky2020golden}. In contrast, the operation complexity of $\oo{\varepsilon^{-1}\log{\varepsilon^{-1}}}$ achieved by our method is significantly better.
\end{rem}

\section{Numerical results}\label{sec:exp}

In this section we conduct some preliminary experiments to test the performance of our proposed method (Algorithm \ref{PPA}), and compare it with FRBS method \cite{malitsky2020forward}, MFBS method with an Armijo-Goldstein-type stepsize \cite{tseng2000modified}, and adaptive golden ratio (AGR) algorithm \cite{malitsky2020golden}, respectively. All the methods are coded in Matlab and all the computations are performed on a desktop with a 3.60 GHz Intel i7-12700K 12-core processor and 32 GB of RAM.

We consider the problem
\begin{equation}\label{l4-prob}
\min_{x\geq0} \max_{\|y\| \leq 1} \Big\{ \|Ax-b\|_4^4+\langle Bx, y\rangle-\|Cy-d\|^4_4\Big\},
\end{equation}
where $A\in\rr^{l\times n}$, $B\in\rr^{m\times n}$, $C\in\rr^{q\times m}$, $b\in\rr^l$, $d\in\rr^q$, and $\|z\|_4=(\sum_i z_i^4)^{1/4}$ for any vector $z$. 

We randomly generate instances for problem \eqref{l4-prob}. Specifically, we first randomly generate $U\in\rr^{l\times (n/10)}$ and $V\in\rr^{(n/10)\times n}$ with all the entries independently chosen from a normal distribution with mean $0$ and standard deviation $0.1$, and a diagonal matrix $D\in\rr^{(n/10)\times (n/10)}$ with all the diagonal entries independently chosen from a uniform distribution between 0 and 1. Then we set $A=UDV$. In a similar vein, we randomly generate $C$.  Besides, we randomly generate $P\in\rr^{m\times l}$ with all the entries independently chosen from the standard normal distribution, and set $B=PA$. In addition, we randomly generate $b$ and $d$ with all the entries independently chosen from the standard normal distribution.

Notice that problem \eqref{l4-prob} is a special case of \eqref{CCSP}. As discussed in Subsection \ref{sec:ccsp},  \eqref{l4-prob} is equivalent to the monotone inclusion problem \eqref{MI} with
\begin{align*}
F(x,y)=\begin{pmatrix}
\nabla (\|Ax-b\|_4^4)+B^Ty\\ \nabla (\|Cy-d\|_4^4)-Bx
\end{pmatrix},\qquad B(x,y)=\begin{pmatrix}
\cN_{\rr^n_+}(x)\\
\cN_{\mathcal{B}}(y)
\end{pmatrix},
\end{align*}
where $\mathcal{B}=\{z\in\rr^m:\|z\|\leq1\}$. Clearly, $B$ is a maximal monotone operator and $F$ is monotone and locally Lipschitz continuous, albeit {\it not globally} Lipschitz continuous. In addition, for any $\gamma>0$, the resolvent of $\gamma B$ can be calculated as
\[
(I+\gamma B)^{-1}\begin{pmatrix}
x\\
y\end{pmatrix}=\begin{pmatrix}
\Pi_{\rr^n_+}(x)\\
\Pi_{\mathcal{B}}(y)
\end{pmatrix}.
\]
As a result,  \eqref{l4-prob} can be suitably solved by Algorithm \ref{PPA},  FRBS \cite{malitsky2020forward}, MFBS  \cite{tseng2000modified}, and AGR \cite{malitsky2020golden}. Our aim is to find a $10^{-4}$-residual solution of the corresponding monotone inclusion problem of  \eqref{l4-prob} for the above instances by using Algorithm \ref{PPA}, FRBS, MFBS and AGR, and compare their performance. Due to this, we terminate them once a $10^{-4}$-residual solution is found.  In addition, for all the methods, we choose $0$ as the initial point and set the parameters as
\begin{itemize}
\item $(\varepsilon,\gamma_0,\delta,\nu,\rho_0,\tau_0,\zeta,\sigma,\eta)=(10^{-4},0.1,0.9,0.5,10,0.09,9,0.1,0.33)$ for Algorithm \ref{PPA};
\item $(\lambda_0,\delta,\sigma)=(0.1,0.5,0.9)$ for FRBS \cite{malitsky2020forward};
\item $(\sigma,\theta,\beta)=(0.1,0.5,0.9)$ for MFBS \cite{tseng2000modified}.
\item $(\lambda_0,\bar\lambda,\phi)=(1,1,1.5)$ for AGR \cite{malitsky2020golden}.
\end{itemize}

The computational results of Algorithm \ref{PPA}, FRBS, MFBS and AGR for the instances randomly generated above are presented in Table \ref{l4-table}. In detail, the value of $(n, m ,l , q)$ is listed in the first four columns. For each instance, the number of gradient evaluations and the CPU time (in seconds) are given in the rest of the columns. One can observe that our method, namely Algorithm \ref{PPA}, substantially outperforms the other three methods in terms of number of gradient evaluations and CPU time. Notice that our method uses both primal and dual extrapolation schemes, while FRBS only uses a dual extrapolation scheme, and MFBS and AGR do not use any of them. The numerical results in Table \ref{l4-table} demonstrate that primal and dual extrapolation schemes have an acceleration effect.
\begin{table}[H]
\centering
\resizebox{0.95\linewidth}{!}{
\begin{tabular}{cccc||llll||llll}
\hline
&&&&\multicolumn{4}{c||}{Gradient evaluations}&\multicolumn{4}{c}{CPU time (seconds)}\\
$n$&$m$&$l$&$q$&Algorithm \ref{PPA}&FRBS&MFBS&AGR&Algorithm \ref{PPA}&FRBS&MFBS&AGR\\\hline
100&10&500&100&$1.23\times10^3$&$3.12\times10^3$&$3.08\times10^3$&$2.12\times10^3$&0.3&0.5&0.4&0.3\\
200&20&1000&200&$2.81\times10^3$&$8.15\times10^3$&$1.20\times10^4$&$7.41\times10^3$&1.4&6.2&6.3&3.7\\
300&30&1500&300&$2.28\times10^4$&$6.25\times10^4$&$8.84\times10^4$&$4.27\times10^4$&23.3&82.3&114.3&57.3\\
400&40&2000&400&$7.62\times10^4$&$1.78\times10^5$&$3.84\times10^5$&$1.24\times10^5$&109.4&314.4&598.4&231.9\\
500&50&2500&500&$5.93\times10^4$&$1.68\times10^5$&$5.95\times10^5$&$1.65\times10^5$&149.2&388.5&1741.8&464.4\\
600&60&3000&600&$6.00\times10^4$&$1.70\times10^5$&$4.28\times10^5$&$1.71\times10^5$&238.4&549.0&1588.8&561.7\\
700&70&3500&700&$6.90\times10^4$&$1.54\times10^5$&$4.71\times10^5$&$1.37\times10^5$&268.3&639.5&2005.9&596.7\\
800&80&4000&800&$4.62\times10^4$&$8.52\times10^4$&$4.04\times10^5$&$8.52\times10^4$&271.6&565.9&2363.5&577.5\\
900&90&4500&900&$5.43\times10^4$&$9.33\times10^4$&$5.17\times10^5$&$8.27\times10^4$&324.3&594.3&3459.8&562.8\\
1000&100&5000&1000&$3.32\times10^4$&$6.13\times10^4$&$5.37\times10^5$&$6.67\times10^4$&380.5&784.2&6206.5&854.8\\
\hline
\end{tabular}
}
\caption{Numerical results for problem \eqref{l4-prob}}\label{l4-table}
\end{table}

\section{Proof of the main results} \label{sec:proof}

In this section we provide a proof of our main results presented in Sections \ref{PDE1} and \ref{PDE2}, which are particularly Theorems \ref{np1}, \ref{thm-outer}, and \ref{thm2}.

\subsection{Proof of the main results in Section \ref{PDE1}}
\label{sec:pf-PDE1}

In this subsection we first establish several technical lemmas and then use them to prove Theorems \ref{np1} and \ref{thm-outer}.

Before proceeding, we introduce some notation that will be used shortly. Recall from Section \ref{PDE1} that $\{x^t\}_{t\in \bbT}$ denotes all the iterates generated by \aref{ALG}, where $\bbT$ is a subset of consecutive nonnegative integers starting from $0$. For any $1\le t \in \bbT$, we define
\begin{align}
\label{def-D}
\Delta^t&= F(x^t)-F(x^{t-1}),\\
\label{def-tD}
\td^t&= \Delta^t-\eta\gamma_{t-1}^{-1}(x^{t}-x^{t-1}).
\end{align}
In addition, we define
\begin{align}
\label{def-v}
v^t&=\gamma_t^{-1}(x^t-x^{t+1}+\alpha_t(x^t-x^{t-1})+\gamma_t\Delta^{t+1}-\gamma_t\beta_t\Delta^t) \qquad \forall 1 \le t \in \bbT-1, \\
\label{def-tg}
\tg_t&=\frac{\gamma_t}{1-\eta},\qquad 
\theta_t=\prod_{i=1}^{t-1}\left(1+2\mu\tg_i\right)=\prod_{i=1}^{t-1}\left(1+\frac{2\mu\gamma_{i}}{1-\eta}\right) \quad \forall 1 \le t \in \bbT-1.\footnotemark
\end{align}
\footnotetext{We set $\theta_1=1$.}

The following lemma establishes some properties of $\{v^t\}_{1\leq t\in\bbT-1}$.

\begin{lemma}\label{L2}
Let $\{x^t\}_{t\in\bbT}$ be generated by \aref{ALG}. Then for all $1 \le t \in \bbT-1$, the following relations hold. 
\begin{align}
v^t &\in(F+B)(x^{t+1}), \label{inclusion} \\
v^t &=\tg_t^{-1}(x^t-x^{t+1}+\tg_t\td^{t+1}-\tg_t\beta_t\td^t). \label{new-def-v}
\end{align}
\end{lemma}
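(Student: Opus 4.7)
The plan is to prove both parts by direct algebraic verification. Part \eqref{inclusion} will come from inverting the resolvent in the update rule \eqref{iter}, and part \eqref{new-def-v} will be an identity that is engineered by the specific choice of $\alpha_t$ in \eqref{para}.

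For \eqref{inclusion}, I would begin by rewriting the resolvent step: the definition of $x^{t+1}$ in \eqref{iter} is equivalent to the inclusion $x^t+\alpha_t(x^t-x^{t-1})-\gamma_t F(x^t)-\gamma_t\beta_t\Delta^t\in(I+\gamma_t B)(x^{t+1})$, and therefore $\gamma_t^{-1}\bigl[x^t-x^{t+1}+\alpha_t(x^t-x^{t-1})-\gamma_t F(x^t)-\gamma_t\beta_t\Delta^t\bigr]\in B(x^{t+1})$. Adding $F(x^{t+1})$ to both sides and invoking $\Delta^{t+1}=F(x^{t+1})-F(x^t)$ from \eqref{def-D} reproduces exactly the formula \eqref{def-v} on the left-hand side, giving $v^t\in(F+B)(x^{t+1})$.

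For \eqref{new-def-v}, the plan is to expand the right-hand side using the definitions $\td^{t+1}=\Delta^{t+1}-\kappa_t\gamma_t^{-1}(x^{t+1}-x^t)$ and $\td^t=\Delta^t-\kappa_{t-1}\gamma_{t-1}^{-1}(x^t-x^{t-1})$ from \eqref{def-tD}, together with $\tg_t=\gamma_t/(1-\kappa_t)$ from \eqref{def-tg}, and check that it coincides term by term with \eqref{def-v}. The coefficient of $(x^t-x^{t+1})$ collapses from $(1-\kappa_t)/\gamma_t+\kappa_t/\gamma_t$ to $1/\gamma_t$, and the cross term $\beta_t\kappa_{t-1}\gamma_{t-1}^{-1}(x^t-x^{t-1})$ rewrites as $(\alpha_t/\gamma_t)(x^t-x^{t-1})$ by the very definition $\alpha_t=\kappa_{t-1}\gamma_t\beta_t/\gamma_{t-1}$ in \eqref{para}. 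After collecting terms, one recovers $v^t$ in the form \eqref{def-v}.

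There is no conceptual obstacle here; both claims are routine verifications. The only substantive content is the observation that the parameter formulas in \eqref{para} have been tuned precisely so that the two displayed forms of $v^t$ agree. Looking ahead, I expect the second representation \eqref{new-def-v} to be the genuinely useful one, because the quantities $\td^{t+1}$ and $\td^t$ are exactly the residuals controlled by the backtracking condition \eqref{term}, which will drive the telescoping estimates used in the complexity proofs of \tref{np1} and \tref{thm-outer}.
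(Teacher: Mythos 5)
Your proposal is correct and follows essentially the same route as the paper: inverting the resolvent in \eqref{iter} and adding $F(x^{t+1})$ to get \eqref{inclusion}, then verifying \eqref{new-def-v} as an algebraic identity via the definitions \eqref{def-tD}, \eqref{def-tg} and the relation $\alpha_t=\kappa_{t-1}\gamma_t\beta_t/\gamma_{t-1}$ (the paper merely runs the second computation in the opposite direction, expanding \eqref{def-v} into the $\tg_t,\td^t$ form rather than collapsing the latter).
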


\begin{proof}
By \mref{iter}, one has
\[
x^t+\alpha_t(x^t-x^{t-1})-\gamma_t(F(x^t)+\beta_t(F(x^t)-F(x^{t-1})))\in x^{t+1}+\gamma_tB(x^{t+1}).
\]
Adding $\gamma_tF(x^{t+1})$ to both sides of this relation, we obtain
\[
x^t+\alpha_t(x^t-x^{t-1})+\gamma_t(F(x^{t+1})-F(x^t))-\gamma_t\beta_t(F(x^t)-F(x^{t-1}))\in x^{t+1}+\gamma_t(F+B)(x^{t+1}),
\]
which together with \eqref{def-D} and  \mref{def-v} yields
\[
v^t=\gamma_t^{-1}(x^t-x^{t+1}+\alpha_t(x^t-x^{t-1})+\gamma_t\Delta^{t+1}-\gamma_t\beta_t\Delta^t)\in(F+B)(x^{t+1}),
\]
and hence \eqref{inclusion} holds. In addition, recall from \eqref{para} that $\alpha_t=\eta\gamma_t\beta_t/\gamma_{t-1}$. By this, \mref{def-tD} and \mref{def-tg}, one has
\begin{align*}
v^t=&\ \gamma_t^{-1}(x^t-x^{t+1}+\alpha_t(x^t-x^{t-1})+\gamma_t\Delta^{t+1}-\gamma_t\beta_t\Delta^t)\notag\\
=&\ \frac{1}{\gamma_t}\left((1-\eta)(x^t-x^{t+1})+\gamma_t(\Delta^{t+1}-\frac{\eta}{\gamma_t}(x^{t+1}-x^{t}))-\gamma_t\beta_t(\Delta^t-\frac{\alpha_t}{\gamma_t\beta_t}(x^t-x^{t-1}))\right)\notag\\
=&\ \tg_t^{-1}(x^t-x^{t+1}+\tg_t\td^{t+1}-\tg_t\beta_t\td^t).
\end{align*}
Hence, \eqref{new-def-v} holds as desired.
\end{proof}

The next two lemmas establish some properties of $\{x^t\}_{t\in\bbT}$.

\begin{lemma} \label{nl2}
Let $\{x^t\}_{t\in\bbT}$ be generated by \aref{ALG}. 
Then for all $1 \le k \in \bbT-1$, we have
\begin{align}
\label{l3a}
\frac{1}{2}\theta_1\|x^0-x^*\|^2-\frac{1}{2}(1+2\mu\tg_{k})\theta_{k}\|x^{k+1}-x^*\|^2\geq-\tg_{k}\theta_{k}\langle\td^{k+1},x^{k+1}-x^*\rangle+R_k,
\end{align}
where
\begin{equation}
R_k=\sum_{t=1}^k\left(\tg_t\beta_t\theta_t\langle\td^t,x^{t+1}-x^t\rangle+\frac{1}{2}\theta_t\|x^{t+1}-x^t\|^2\right). \label{Qk}
\end{equation}
\end{lemma}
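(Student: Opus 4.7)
The plan is to derive a per-iteration inequality from the monotonicity of $F+B$ evaluated at $x^{t+1}$ and $x^*$, weight it by $\theta_t$, and then telescope from $t=1$ to $t=k$. Specifically, since $v^t\in(F+B)(x^{t+1})$ by \eqref{inclusion} and $0\in(F+B)(x^*)$, the $\mu$-monotonicity assumption gives $\langle v^t,x^{t+1}-x^*\rangle\geq\mu\|x^{t+1}-x^*\|^2$. Substituting the alternative representation \eqref{new-def-v} and expanding $\langle x^t-x^{t+1},x^{t+1}-x^*\rangle$ via the three-point identity, then multiplying through by $\tg_t$, I obtain
\begin{align*}
\tfrac{1}{2}\|x^t-x^*\|^2-\tfrac{1}{2}(1+2\mu\tg_t)\|x^{t+1}-x^*\|^2 \geq \tfrac{1}{2}\|x^{t+1}-x^t\|^2-\tg_t\langle\td^{t+1},x^{t+1}-x^*\rangle+\tg_t\beta_t\langle\td^t,x^{t+1}-x^*\rangle.
\end{align*}

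Next, I would multiply by $\theta_t$ and sum from $t=1$ to $t=k$. The weight identity $\theta_t(1+2\mu\tg_t)=\theta_{t+1}$, which is immediate from the definition \eqref{def-tg}, causes the squared-distance terms to telescope, producing $\tfrac12\theta_1\|x^1-x^*\|^2-\tfrac12(1+2\mu\tg_k)\theta_k\|x^{k+1}-x^*\|^2$ on the left. Since $x^0=x^1$, this matches $\tfrac12\theta_1\|x^0-x^*\|^2$ on the left side of \eqref{l3a}.

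The last ingredient is the key algebraic identity $\tg_t\beta_t\theta_t=\tg_{t-1}\theta_{t-1}$ for all $t\geq 2$, which follows from rewriting the update \eqref{para} as $\beta_t=(\tg_{t-1}/\tg_t)(1+2\mu\tg_{t-1})^{-1}$ and using $\theta_t=\theta_{t-1}(1+2\mu\tg_{t-1})$. Using this identity, I pull off the boundary contribution $-\tg_k\theta_k\langle\td^{k+1},x^{k+1}-x^*\rangle$ from the sum $-\sum_{t=1}^k\tg_t\theta_t\langle\td^{t+1},x^{t+1}-x^*\rangle$ and reindex the remainder so that it combines with $\sum_{t=1}^k\tg_t\beta_t\theta_t\langle\td^t,x^{t+1}-x^*\rangle$. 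The $t=1$ term of the latter sum vanishes because $x^0=x^1$ forces $\td^1=F(x^1)-F(x^0)-\kappa_0\gamma_0^{-1}(x^1-x^0)=0$. What remains for $t\geq 2$ telescopes pairwise into $\sum_{t=2}^k\tg_{t-1}\theta_{t-1}\langle\td^t,x^{t+1}-x^t\rangle$, which by the same identity equals $\sum_{t=1}^k\tg_t\beta_t\theta_t\langle\td^t,x^{t+1}-x^t\rangle$, i.e., exactly the cross-term part of $R_k$.

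The main obstacle is verifying the algebraic identity $\tg_t\beta_t\theta_t=\tg_{t-1}\theta_{t-1}$; everything else is a disciplined telescoping exercise. This identity is precisely what is designed into the choices of $\beta_t$ and $\tg_t$ in \eqref{para}, so the calculation should be clean but must be done carefully because the cancellation between the two cross-term sums (one indexed by $\td^{t+1}$, the other by $\td^t$) hinges entirely on it. Once the identity is in hand and the boundary term $\td^1=0$ is checked, the result \eqref{l3a} follows by direct rearrangement.
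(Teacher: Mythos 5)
Your proposal is correct and follows essentially the same route as the paper: the monotonicity inequality at $(x^{t+1},x^*)$ combined with \eqref{new-def-v} and the three-point identity, weighting by $\theta_t$, and telescoping via $\theta_{t+1}=(1+2\mu\tg_t)\theta_t$, $\td^1=0$, and the design identity $\tg_t\beta_t\theta_t=\tg_{t-1}\theta_{t-1}$ (the paper writes it as $\theta_{t+1}\tg_{t+1}\beta_{t+1}=\theta_t\tg_t$). The only difference is bookkeeping: the paper splits $\langle\td^t,x^{t+1}-x^*\rangle$ into $\langle\td^t,x^t-x^*\rangle+\langle\td^t,x^{t+1}-x^t\rangle$ before summing, while you reindex after summing, which is the same cancellation.
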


\begin{proof}
By \mref{monotone}, \mref{inclusion} and $0 \in (F+B)(x^*)$, one has
\begin{align*}
\langle v^t,x^{t+1}-x^*\rangle\geq\mu\|x^{t+1}-x^*\|^2,
\end{align*}
which along with \mref{new-def-v} implies that
\begin{align*}
\tg_t\mu\|x^{t+1}-x^*\|^2\leq& \ \langle  x^t-x^{t+1}+\tg_t\td^{t+1}-\tg_t\beta_t\td^t,x^{t+1}-x^*\rangle\\
=&\ \langle x^t-x^{t+1},x^{t+1}-x^*\rangle+\tg_t\langle\td^{t+1},x^{t+1}-x^*\rangle-\tg_t\beta_t\langle\td^t,x^{t+1}-x^*\rangle\\
=&\ \frac{1}{2}\left(\|x^t-x^*\|^2-\|x^{t+1}-x^*\|^2-\|x^t-x^{t+1}\|^2\right)+\tg_t\langle\td^{t+1},x^{t+1}-x^*\rangle\\
&-\tg_t\beta_t\langle\td^{t},x^{t}-x^*\rangle-\tg_t\beta_t\langle\td^t,x^{t+1}-x^t\rangle.
\end{align*}
Rearranging the terms in the above inequality yields
\begin{align*}
\frac{1}{2}\|x^t-x^*\|^2-\frac{1}{2}(1+2\tg_t\mu)\|x^{t+1}-x^*\|^2\geq&\  \tg_t\beta_t\langle\td^t,x^t-x^*\rangle-\tg_t\langle\td^{t+1},x^{t+1}-x^*\rangle\\
&\ +\tg_t\beta_t\langle\td^t,x^{t+1}-x^t\rangle+\frac{1}{2}\|x^{t+1}-x^t\|^2. 
\end{align*}
Multiplying both sides of this inequality by $\theta_t$ and summing it up for $t=1,\dots,k$, we have
\begin{align}
& \sum_{t=1}^{k}\left(\frac{1}{2}\theta_t\|x^t-x^*\|^2-\frac{1}{2}(1+2\tg_t\mu)\theta_t\|x^{t+1}-x^*\|^2\right) \notag  \\
& \geq \sum_{t=1}^k\theta_t\tg_t\beta_t\langle\td^t,x^t-x^*\rangle -\sum_{t=1}^k\theta_t\tg_t\langle\td^{t+1},x^{t+1}-x^*\rangle+R_k\notag \\
&=\theta_1\tg_1\beta_1\langle\td^1,x^1-x^*\rangle+\sum_{t=1}^{k-1}(\theta_{t+1}\tg_{t+1}\beta_{t+1}-\theta_t\tg_t)\langle\td^{t+1},x^{t+1}-x^*\rangle-\tg_{k}\theta_{k}\langle\td^{k+1},x^{k+1}-x^*\rangle+R_k. \label{theta4}
\end{align}
In addition, it follows from $x^0=x^1$ and \eqref{def-tD} that $\td^1=0$. Also, by the definition of $\tg_t$, $\theta_t$ and $\beta_t$ in \mref{para} and \mref{def-tg},  one has 
\begin{align}
\theta_{t+1}\tg_{t+1}\beta_{t+1}-\theta_t\tg_t&=\theta_{t}\left(1+\frac{2\mu\gamma_{t}}{1-\eta}\right)\cdot\frac{\gamma_{t+1}}{1-\eta}\cdot\frac{\gamma_{t}}{\gamma_{t+1}}\left(1+\frac{2\mu\gamma_{t}}{1-\eta}\right)^{-1}-\theta_{t}\frac{\gamma_t}{1-\eta}=0, \notag \\ 
\theta_{t+1}&=(1+2\tg_t\mu)\theta_t. \label{theta-t}
\end{align}
Using these, $\td^1=0$ and \mref{theta4}, we obtain
\begin{align*}
\sum_{t=1}^{k}\left(\frac{1}{2}\theta_t\|x^t-x^*\|^2-\frac{1}{2}\theta_{t+1}\|x^{t+1}-x^*\|^2\right)
\geq-\tg_{k}\theta_{k}\langle\td^{k+1},x^{k+1}-x^*\rangle+R_k,
\end{align*}
which yields
\begin{align*}
\frac{1}{2}\theta_1\|x^0-x^*\|^2-\frac{1}{2}\theta_{k+1}\|x^{k+1}-x^*\|^2\geq-\tg_{k}\theta_{k}\langle\td^{k+1},x^{k+1}-x^*\rangle+R_k.
\end{align*}
The conclusion then follows from this and \eqref{theta-t} with $t=k$.
\end{proof}

\begin{lemma}\label{nlep}
Let $\{x^t\}_{t\in\bbT}$ be generated by \aref{ALG}.  Then we have
\begin{equation}
\|x^{k+1}-x^*\|^2\leq\frac{1}{(1-2\nu^2)\theta_{k}}\|x^0-x^*\|^2 \qquad \forall 1 \le k \in \bbT-1. 
\label{l3b}
\end{equation}
\end{lemma}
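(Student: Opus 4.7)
The plan is to start from Lemma \ref{nl2}, rearrange it to isolate $\theta_{k+1}\|x^{k+1}-x^*\|^2$ on one side, and then bound the "error" terms using the line search condition \eqref{term} combined with weighted AM--GM inequalities. More precisely, recalling from the proof of Lemma \ref{nl2} that $\theta_{k+1}=(1+2\mu\tg_k)\theta_k$, multiplying \eqref{l3a} by $2$ and rearranging yields
\begin{equation*}
\theta_{k+1}\|x^{k+1}-x^*\|^2 \le \theta_1\|x^0-x^*\|^2 + 2\tg_k\theta_k\langle\td^{k+1},x^{k+1}-x^*\rangle - 2R_k.
\end{equation*}
The goal is therefore to absorb the last two terms on the right into a nonnegative correction of the left-hand side, losing at most a factor involving $2\nu^2$.

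First I would simplify the cross terms in $R_k$ using the identity $\theta_t\tg_t\beta_t=\theta_{t-1}\tg_{t-1}$ (already established in the proof of Lemma \ref{nl2}), which gives
\begin{equation*}
-2R_k = -\sum_{t=1}^{k}\bigl(2\theta_{t-1}\tg_{t-1}\langle\td^t,x^{t+1}-x^t\rangle + \theta_t\|x^{t+1}-x^t\|^2\bigr),
\end{equation*}
where the $t=1$ cross term vanishes because $x^0=x^1$ forces $\td^1=0$. The line search rule \eqref{term} says precisely $\tg_{t-1}\|\td^t\|\le \nu\|x^t-x^{t-1}\|$ for each $t$ with $x^t\in\bbT$, and similarly $\tg_k\|\td^{k+1}\|\le\nu\|x^{k+1}-x^k\|$. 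Applying Cauchy--Schwarz then this line search estimate converts the cross terms into products of consecutive $\|x^{t+1}-x^t\|$ and $\|x^{k+1}-x^*\|$.

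Next I would apply the weighted inequality $2ab\le \lambda a^2 + b^2/\lambda$ with $\lambda = 1/(2\nu)$ to every cross term, namely
\begin{equation*}
2\nu\theta_{t-1}\|x^t-x^{t-1}\|\|x^{t+1}-x^t\| \le \tfrac{1}{2}\theta_{t-1}\|x^t-x^{t-1}\|^2 + 2\nu^2\theta_{t-1}\|x^{t+1}-x^t\|^2,
\end{equation*}
and the same kind of split with $\lambda=1/(2\nu)$ applied to $2\tg_k\theta_k\langle\td^{k+1},x^{k+1}-x^*\rangle$, which contributes $\tfrac{1}{2}\theta_k\|x^{k+1}-x^k\|^2 + 2\nu^2\theta_k\|x^{k+1}-x^*\|^2$. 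Reindexing the first sum and using $\theta_{t-1}\le\theta_t$, a direct term-by-term accounting shows that for each $t$ the coefficient in front of $\|x^{t+1}-x^t\|^2$ in the combined expression is $\le (2\nu^2-\tfrac{1}{2})\theta_t\le 0$, since $\nu\le 1/2$. Only the term $2\nu^2\theta_k\|x^{k+1}-x^*\|^2$ survives on the right, so
\begin{equation*}
(\theta_{k+1}-2\nu^2\theta_k)\|x^{k+1}-x^*\|^2 \le \theta_1\|x^0-x^*\|^2,
\end{equation*}
and using $\theta_{k+1}\ge\theta_k$ together with $\theta_1=1$ yields \eqref{l3b}.

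The main obstacle is the bookkeeping of the AM--GM weights: the factor $1-2\nu^2$ in the conclusion is tight and only appears if the weight $\lambda$ is chosen consistently as $1/(2\nu)$ both for the interior cross terms inside $R_k$ and for the boundary term $2\tg_k\theta_k\langle\td^{k+1},x^{k+1}-x^*\rangle$, so that each $\|x^{t+1}-x^t\|^2$ coefficient becomes nonpositive while all the $\|x^{k+1}-x^*\|^2$ contributions accumulate into the single $2\nu^2\theta_k$ term. Any suboptimal choice (for example the symmetric $\lambda=1$) produces a constant strictly worse than $1-2\nu^2$ and loses the target inequality.
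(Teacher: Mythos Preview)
Your proposal is correct and follows essentially the same approach as the paper: both start from Lemma \ref{nl2}, rewrite the line search condition \eqref{term} as $\tg_{t-1}\|\td^t\|\le\nu\|x^t-x^{t-1}\|$, and then use Cauchy--Schwarz together with weighted AM--GM to absorb all cross terms, the key point being $\nu\le 1/2$. The only cosmetic difference is organizational: the paper first isolates the intermediate bound $R_k\ge\tfrac14\theta_k\|x^{k+1}-x^k\|^2$ (via the split $\tfrac12\theta_t=\tfrac14\theta_t+\tfrac14\theta_{t-1}+\text{boundary}$ and the relation $\theta_{t-1}\ge 4\nu^2\tg_{t-1}^{-2}\tg_t^2\beta_t^2\theta_t$) and then completes the square on the remaining boundary term, whereas you handle all interior and boundary cross terms in a single pass with the uniform weight $\lambda=1/(2\nu)$.
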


\begin{proof}
By the definition of $\beta_t$ and $\tg_t$ in \mref{para} and \mref{def-tg},  one has $\tg_{t-1}^{-1}\tg_t\beta_t=\left(1+2\mu\gamma_{t-1}/(1-\eta)\right)^{-1}$. Using this and the definition of $\theta_t$ in \mref{def-tg}, we obtain
\begin{align}
\label{theta3}
\theta_{t-1}-4\nu^2\tg_{t-1}^{-2}\tg_t^2\beta_t^2\theta_t=& \ \theta_{t-1}\left(1-4\nu^2\left(1+\frac{2\mu\gamma_{t-1}}{1-\eta}\right)^{-2}\frac{\theta_{t}}{\theta_{t-1}}\right)\notag\\
\overset{\mref{def-tg}}{=}& \ \theta_{t-1}\left(1-4\nu^2\left(1+\frac{2\mu\gamma_{t-1}}{1-\eta}\right)^{-1}\right)\geq0,
\end{align}
where the last inequality follows from the fact that $0<\nu\leq1/2$. In addition, it follows from \mref{term}, \eqref{def-D}, \eqref{def-tD}, and the definition of $\tg_t$ in \eqref{def-tg} that
\begin{align}
\label{term-var}
\|\td^t\|\leq\nu\tg_{t-1}^{-1} \|x^t-x^{t-1}\| \qquad \forall 2 \le t \in \bbT.
\end{align}
Recall that $R_k$ is defined in \eqref{Qk}. Letting $\theta_0=0$, and using \eqref{Qk}, \eqref{theta3}, \eqref{term-var} and $x^0=x^1$, we have
\begin{align}
R_k\overset{\eqref{Qk}}{\geq}& \ \sum_{t=1}^k\left(-\tg_t\beta_t\theta_t\|\td^t\|\|x^{t+1}-x^t\|+\frac{1}{2}\theta_t\|x^{t+1}-x^t\|^2\right)\notag\\
\overset{\eqref{term-var}}{\ge}& \ \sum_{t=1}^k\left(-\nu\tg_{t-1}^{-1}\tg_t\beta_t\theta_t\|x^t-x^{t-1}\|\|x^{t+1}-x^t\|+\frac{1}{2}\theta_t\|x^{t+1}-x^t\|^2\right)\notag\\
=& \ \sum_{t=1}^k\left(-\nu\tg_{t-1}^{-1}\tg_t\beta_t\theta_t\|x^t-x^{t-1}\| \|x^{t+1}-x^t\|+\frac{1}{4}\theta_t\|x^{t+1}-x^t\|^2  +\frac{1}{4}\theta_{t-1}\|x^{t}-x^{t-1}\|^2 \right) \notag\\ 
&\,\, +\frac{1}{4}\theta_k\|x^{k+1}-x^k\|^2\notag\\
\ge & \ \sum_{t=1}^k\left(\left(\sqrt{\theta_t\theta_{t-1}}/2-\nu\tg_{t-1}^{-1}\tg_t\beta_t\theta_t\right)\|x^t-x^{t-1}\|\|x^{t+1}-x^t\|\right)  +\frac{1}{4}\theta_k\|x^{k+1}-x^k\|^2 \notag\\
\overset{\mref{theta3}}{\geq} & \ \frac{1}{4}\theta_k\|x^{k+1}-x^k\|^2. \notag
\end{align}
Using this, \eqref{l3a} and \eqref{term-var}, we further obtain
\begin{align*}
\frac{1}{2}\theta_1\|x^0-x^*\|^2-\frac{1}{2}(1+2\mu\tg_{k})\theta_{k}\|x^{k+1}-x^*\|^2\geq&-\tg_{k}\theta_{k}\langle\td^{k+1},x^{k+1}-x^*\rangle+\frac{1}{4}\theta_k\|x^{k+1}-x^k\|^2\\
\geq &-\tg_{k}\theta_{k}\|\td^{k+1}\|\|x^{k+1}-x^*\|+\frac{1}{4}\theta_k\|x^{k+1}-x^k\|^2\\
\overset{\eqref{term-var}}{\geq}&-\nu\theta_k\|x^{k+1}-x^k\| \|x^{k+1}-x^*\|+\frac{1}{4}\theta_k\|x^{k+1}-x^k\|^2\\
\geq&-\nu^2\theta_k\|x^{k+1}-x^*\|^2.
\end{align*}
It then follows from this, $\theta_1=1$, and $0<\nu\leq1/2$ that 
\begin{align*}
\|x^{k+1}-x^*\|^2\leq\frac{\theta_1}{(1+2\mu\tg_{k}-2\nu^2)\theta_k}\|x^0-x^*\|^2\leq\frac{1}{(1-2\nu^2)\theta_k}\|x^0-x^*\|^2.
\end{align*}
\end{proof}

In what follows, we will show that $\{n_t\}_{1\leq t\in \bbT-1}$ is bounded, that is, the number of evaluations of $F$ and resolvent of $B$ is bounded above by a constant for all iterations $t\in \bbT-1$.  To this end, we define 
\begin{align}
&x^{t+1}(\gamma)=(I+\gamma B)^{-1}\left(x^t+\alpha_t(\gamma)(x^t-x^{t-1})-\gamma \left(F(x^t)+\beta_t(\gamma)(F(x^t)-F(x^{t-1}))\right)\right) \quad \forall \gamma >0, \label{def-x} \\
&V_{t+1}(\gamma)= \|F(x^{t+1}(\gamma))-F(x^{t})- \eta \gamma^{-1}(x^{t+1}(\gamma)-x^t)\| \quad \forall \gamma >0, \label{L-gamma} 
\end{align}
where 
\beq \label{lambda-gamma}
\beta_t(\gamma)=\frac{\gamma_{t-1}}{\gamma}\left(1+\frac{2\mu\gamma_{t-1}}{1-\eta}\right)^{-1}, \quad \alpha_t(\gamma)=\frac{\eta\gamma\beta_t(\gamma)}{\gamma_{t-1}}. 
\eeq

The following lemma establishes some property of $x^{t+1}(\gamma)$, which will be used shortly.

\begin{lemma} \label{nlemma3}
Let $\cS$ and $\hS$ be defined in \eqref{set-S} and \eqref{set-hS}.  Assume that $x^t,x^{t-1}\in\cS$ for some $1 \le t \in \bbT-1$. Then $x^{t+1}(\gamma)\in \hS$ for any $0<\gamma \le \gamma_0$.
\end{lemma}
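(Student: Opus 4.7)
The plan is to exploit two facts: (i) the resolvent $(I+\gamma B)^{-1}$ is nonexpansive, and (ii) since $0\in(F+B)(x^*)$, we have $-F(x^*)\in B(x^*)$, so $x^*=(I+\gamma B)^{-1}(x^*-\gamma F(x^*))$ for every $\gamma>0$. Applying nonexpansiveness to the definition \eqref{def-x} and subtracting off this identity gives, after the triangle inequality,
\[
\|x^{t+1}(\gamma)-x^*\|\le \|x^t-x^*\|+\alpha(\gamma)\|x^t-x^{t-1}\|+\gamma\|F(x^t)-F(x^*)\|+\gamma\beta(\gamma)\|F(x^t)-F(x^{t-1})\|.
\]
So the task reduces to bounding each of these four terms uniformly in $\gamma\in(0,\gamma_0]$.

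Next I derive explicit bounds on $\alpha(\gamma)$ and $\gamma\beta(\gamma)$ directly from \eqref{lambda-gamma}. The input condition $\kappa_{t-1}\in[0,\xi/(1+\xi)]$ rewrites as $\kappa_{t-1}/(1-\kappa_{t-1})\le \xi$ and $1/(1-\kappa_{t-1})\le 1+\xi$. Combined with $1-\kappa_t\le 1$ and $(1+2\mu\gamma_{t-1}/(1-\kappa_{t-1}))^{-1}\le 1$, these yield
\[
\alpha(\gamma)\le \xi,\qquad \gamma\beta(\gamma)\le (1+\xi)\gamma_{t-1}\le (1+\xi)\gamma_0,
\]
where $\gamma_{t-1}\le \gamma_0$ follows from $\gamma_t=\gamma_0\delta^{n_t}$ with $\delta\in(0,1)$ and $n_t\ge 0$. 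I then invoke \lref{F-Lipschtiz}(i) to obtain $\|F(x^t)-F(x^*)\|\le L_\cS\|x^t-x^*\|$ and $\|F(x^t)-F(x^{t-1})\|\le L_\cS\|x^t-x^{t-1}\|$, using that $x^t,x^{t-1},x^*\in\cS$. The hypothesis $x^t,x^{t-1}\in\cS$ also supplies $\|x^t-x^*\|,\|x^{t-1}-x^*\|\le r_0/\sqrt{1-2\nu^2}$, so $\|x^t-x^{t-1}\|\le 2r_0/\sqrt{1-2\nu^2}$.

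Plugging everything in and using the parameter constraint $\xi<\nu\le 1/2$, which gives $2\xi<1$ and $1+\xi<3/2$, the four contributions are bounded respectively by $r_0/\sqrt{1-2\nu^2}$, $2\xi\, r_0/\sqrt{1-2\nu^2}\le r_0/\sqrt{1-2\nu^2}$, $\gamma_0 L_\cS r_0/\sqrt{1-2\nu^2}$, and $2(1+\xi)\gamma_0 L_\cS r_0/\sqrt{1-2\nu^2}\le 3\gamma_0 L_\cS r_0/\sqrt{1-2\nu^2}$. Summing gives
\[
\|x^{t+1}(\gamma)-x^*\|\le \frac{(2+4\gamma_0 L_\cS)r_0}{\sqrt{1-2\nu^2}},
\]
which is precisely the radius defining $\hS$ in \eqref{set-hS}. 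Since $x^{t+1}(\gamma)\in\dom B$ by construction as a resolvent image, the conclusion follows. The whole proof is essentially parameter bookkeeping; the only mildly delicate point is ensuring that the numerical constants $2\xi<1$ and $1+\xi\le 3/2$ dictated by $\xi<\nu\le 1/2$ line up exactly with the coefficients $2$ and $4$ in the definition of $\hS$.
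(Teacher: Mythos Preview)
Your proof is correct and follows essentially the same approach as the paper: the paper derives $\|x^{t+1}(\gamma)-x^*\|\le\|x^t-x^*\|+\|w\|$ directly from the monotonicity of $B$, while you invoke the (equivalent and well-known) nonexpansiveness of the resolvent $(I+\gamma B)^{-1}$ to reach the same bound. The only other cosmetic difference is that the paper uses the cruder estimates $\alpha(\gamma)\le 1/2$ and $\gamma\beta(\gamma)\le (3/2)\gamma_0$ (from $\kappa_t<1/3$) rather than your sharper $\alpha(\gamma)\le\xi$ and $\gamma\beta(\gamma)\le(1+\xi)\gamma_0$, but both routes land on the same constant $(2+4\gamma_0 L_\cS)$.
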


\begin{proof}
Fix any $\gamma \in (0,\gamma_0]$.  It follows from \mref{def-x} that
\begin{align*}
x^t-x^{t+1}(\gamma)+\alpha_t(\gamma)(x^t-x^{t-1})-\gamma(F(x^t)+\beta_t(\gamma)(F(x^t)-F(x^{t-1})))\in\gamma B(x^{t+1}(\gamma)).
\end{align*}
Also, by the definition of $x^*$, one has $-\gamma F(x^* )\in \gamma B(x^*)$. These along with the monotonicity of $B$ imply that
\begin{align}
\langle x^t-x^{t+1}(\gamma)+w,x^{t+1}(\gamma)-x^*\rangle\geq0, \label{curve-cond}
\end{align}
where 
\beq \label{w}
w=\alpha_t(\gamma)(x^t-x^{t-1})-\gamma(F(x^t)-F(x^*))-\gamma\beta_t(\gamma)(F(x^t)-F(x^{t-1})).
\eeq 
It follows from \eqref{curve-cond} that 
\begin{align*}
\|x^{t+1}(\gamma)-x^*\|^2 \le \langle x^t-x^*+w, x^{t+1}(\gamma)-x^* \rangle \le \|x^t-x^*+w\|
\|x^{t+1}(\gamma)-x^*\|,
\end{align*}
which implies that
\beq \label{eq-xg}
\|x^{t+1}(\gamma)-x^*\| \le \|x^t-x^*+w\| \le \|x^t-x^*\|+\|w\|.
\eeq
Notice from \aref{ALG} that $0<\gamma_{t-1}\leq\gamma_0$ and $0\leq\eta<1/3$. Using these and \eqref{lambda-gamma}, we have
\begin{align}
&\gamma\beta_t(\gamma)=\gamma_{t-1}\left(1+\frac{2\mu\gamma_{t-1}}{1-\eta}\right)^{-1}\leq\gamma_0, \label{lamb-bnd}\\ &\alpha_t(\gamma)=\frac{\eta\gamma\beta_t(\gamma)}{\gamma_{t-1}}=\eta\left(1+\frac{2\mu\gamma_{t-1}}{1-\eta}\right)^{-1}\leq\frac{1}{3}.  \label{gamma-bnd}
\end{align}
Recall that $\cS$, $r_0$ and $w$ are given in \mref{set-S} and \eqref{w}, respectively. Using    $x^t,x^{t-1},x^*\in\cS$, $0<\gamma\leq \gamma_0$, \mref{set-S}, \eqref{w}, \eqref{lamb-bnd} and \eqref{gamma-bnd}, we have 
\begin{align*}
\|w\|\leq& \ \alpha_t(\gamma)\|x^t-x^{t-1}\|+\gamma L_\cS\|x^t-x^*\|+\gamma\beta_t(\gamma) L_\cS\|x^t-x^{t-1}\|\\
\leq& \ \left(\frac{1}{3}+\gamma_0L_\cS\right)\|x^t-x^{t-1}\|+\gamma_0L_\cS\|x^t-x^*\|\\
\leq&\ \left(\frac{1}{3}+\gamma_0L_\cS\right)(\|x^t-x^*\|+\|x^{t-1}-x^*\|)+\gamma_0L_\cS\|x^t-x^*\|\\
\leq&\ \frac{2}{\sqrt{1-2\nu^2}}\left(\frac{1}{3}+\gamma_0L_\cS\right)r_0+\frac{1}{\sqrt{1-2\nu^2}}\gamma_0L_\cS r_0 \leq \frac{(2+9\gamma_0 L_\cS)r_0}{3\sqrt{1-2\nu^2}}.
\end{align*}
This together with $x^t \in \cS$, \mref{set-S} and \mref{eq-xg} yields
\begin{align*}
\|x^{t+1}(\gamma)-x^*\| \leq& \|x^t-x^*\| + \|w\| < \frac{(5+9\gamma_0 L_\cS)r_0}{3\sqrt{1-2\nu^2}}.
\end{align*}
The conclusion then follows from this and the definition of $\hS$ in \mref{set-hS}.
\end{proof}

The next lemma provides an upper bound on $n_t$, which will be used to prove Theorem \ref{np1}. 

\begin{lemma} \label{nlemma4}
Assume that $x^{t-1},x^t \in\cS$ for  $t \ge 1$ and  \aref{ALG} has not yet terminated at   iteration $t-1$. 
Then $x^{t+1}$ is successfully generated by \aref{ALG} at iteration $t$ with $n_t \leq M+t-\sum_{i=1}^{t-1} n_i$, where $M$ is given in \eqref{nlocal1}.
\end{lemma}

\begin{proof}
Recall that $x^{t+1}(\gamma)$ and $V_{t+1}(\gamma)$ are defined in \eqref{def-x} and 
\eqref{L-gamma}, respectively. It follows from \lref{nlemma3} that $x^{t+1}(\gamma)\in \hS$ for any $0<\gamma\leq\gamma_0$. Also, notice that $x^t \in \cS \subset \hS$. By these and Lemma \ref{F-Lipschtiz}(ii), one has 
\[
\|F(x^{t+1}(\gamma))-F(x^{t})\| \le L_\hS \|x^{t+1}(\gamma)-x^t\| \quad \forall 0<\gamma\leq\gamma_0.
\]
Using this and \eqref{L-gamma}, we obtain that for any $0<\gamma\leq\gamma_0$, 
\begin{align}
V_{t+1}(\gamma)\leq \|F(x^{t+1}(\gamma))-F(x^{t})\|+ \eta\gamma^{-1}\|x^{t+1}(\gamma)-x^t\| \leq (L_\hS+\eta\gamma^{-1})\|x^{t+1}(\gamma)-x^t\|. \label{L-bnd}
\end{align}
In addition, notice from \eqref{para} that $\gamma_i\leq\gamma_{i-1}\delta^{n_i-1}$ for $i=1,\ldots, t-1$, which implies that $\gamma_{t-1}\leq\gamma_0\delta^{\sum_{i=1}^{t-1} (n_i-1)}$.  Let $\gamma=\min\{\gamma_0,\delta^{-1}\gamma_{t-1}\}\delta^{M+t-\sum_{i=1}^{t-1} n_i}$.  In view of these, $\delta\in(0,1)$,  and the definition of $M$ in \eqref{nlocal1}, one can verify that 
\[
0<\gamma \leq\delta^{-1}\gamma_{t-1}\delta^{M+t-\sum_{i=1}^{t-1} n_i}\leq \gamma_0\delta^M\leq \min\{\xi/L_\hS, \gamma_0\}.
\] 
It then follows from this, \eqref{xi}, and \eqref{L-bnd} that 
\[
V_{t+1}(\gamma)\leq (L_\hS\gamma+\eta)\gamma^{-1}\|x^{t+1}(\gamma)-x^t\|\leq \nu(1-\eta)\gamma^{-1}\|x^{t+1}(\gamma)-x^t\|,
\] 
which,  together with \eqref{term}, \eqref{L-gamma}, the expression of $\gamma$, and  the definition of $n_t$ (see step 2 of Algorithm \ref{ALG}),  implies that $n_t \le M+t-\sum_{i=1}^{t-1} n_i$ and hence $x^{t+1}$ is successfully generated.
\end{proof}
 
We are now ready to prove the main results presented in Section \ref{PDE1}, namely, Theorems \ref{np1} and \ref{thm-outer}.

\begin{proof}[\textbf{Proof of Theorem \ref{np1}}]
We prove this theorem by induction. Indeed, notice from \aref{ALG} that $x^0=x^1\in \cS$. It then follows from Lemma \ref{nlemma4} that $x^2$ is successfully generated and $n_1 \le M+1$. Hence, \aref{ALG} is well-defined at iteration 1. By this, \eqref{def-tg},  and \eqref{l3b} with $k=1$, one has
\[
\|x^2-x^*\|^2\overset{\eqref{l3b}}{\leq}\frac{1}{(1-2\nu^2)\theta_1}\|x^0-x^*\|^2\overset{\eqref{def-tg}}{=}\frac{1}{1-2\nu^2}\|x^0-x^*\|^2,
\]
which together with \eqref{set-S} implies that $x^2\in\cS$. 
Now, suppose for induction that \aref{ALG} is well-defined at iteration 1 to $t-1$ and $x^i\in \cS$ for all $0 \le i \le t$ for some $2 \le t \in \bbT-1$.  It then follows from Lemma \ref{nlemma4} that $x^{t+1}$ is successfully generated and  $\sum_{i=1}^{t} n_i \leq  M+t$. Hence, \aref{ALG} is well-defined at iteration $t$. By this, \eqref{def-tg},  and \eqref{l3b} with $k=t$, one has
\[
\|x^{t+1}-x^*\|^2\overset{\eqref{l3b}}{\leq}\frac{1}{(1-2\nu^2)\theta_t}\|x^0-x^*\|^2\overset{\eqref{def-tg}}{\le}\frac{1}{1-2\nu^2}\|x^0-x^*\|^2,
\]
which together with \eqref{set-S} implies that $x^{t+1}\in\cS$. Hence, the induction is completed and the conclusion of this theorem holds.
\end{proof}

\begin{proof}[\textbf{Proof of \tref{thm-outer}}]
Notice from \aref{ALG} that $0\leq\eta<1/3$ and $0<\gamma_t \leq\gamma_0$ for all $0\leq t\in\bbT-1$. Using these and \eqref{para}, we have that for all $1 \le t \in \bbT-1$, 
\begin{align}
\gamma_t\beta_t=\gamma_{t-1}\left(1+\frac{2\mu\gamma_{t-1}}{1-\eta}\right)^{-1}\leq\gamma_0,\quad\alpha_t=\frac{\eta\gamma_t\beta_t}{\gamma_{t-1}}=\eta\left(1+\frac{2\mu\gamma_{t-1}}{1-\eta}\right)^{-1}\leq\frac{1}{3}. \label{alpha-bnd}
\end{align}

We next show by induction that
\beq \label{gammat-bnd-1}
\gamma_t  \geq \min\Big\{L_\hS^{-1}\delta\xi,\gamma_0\Big\} \quad \forall 0 \le t \in \bbT-1, 
\eeq
where $\xi$ is defined in \eqref{xi}. Indeed, \eqref{gammat-bnd-1} clearly holds at $t=0$.  Suppose that \eqref{gammat-bnd-1} holds at some $0 \le t-1 \in \bbT-2$. We now show that \eqref{gammat-bnd-1} holds at $t$ by considering the following two separate cases.

Case (a): $n_t=0$. It follows from this and \aref{ALG} that $\gamma_t=\min\{\gamma_0,\delta^{-1}\gamma_{t-1}\}$, which together with $\delta\in(0,1)$ and  \eqref{gammat-bnd-1} with $t$ replaced by $t-1$ implies that \eqref{gammat-bnd-1} holds at $t$.

Case (b): $n_t>0$. By this,  \eqref{para}, $\delta\in(0,1)$ and the definition of $n_t$, one can observe that 
$\gamma_t/\delta=\min\{\gamma_0,\delta^{-1}\gamma_{t-1}\}\delta^{n_t-1} \leq \gamma_0$ and 
 \eqref{term} will not hold if $\gamma_t$ is replaced by $\gamma_t/\delta$. Besides, from the proof of Lemma \ref{nlemma4}, one can see that  \eqref{term} will hold if $\gamma_t$ is replaced by $\tilde \gamma$  satisfying $0<\tilde \gamma \leq \min\{\xi/L_\hS, \gamma_0\}$. Hence, it follows that $\gamma_t/\delta>\xi/L_\hS$, which together with $\gamma_t \leq \gamma_0$ implies that \eqref{gammat-bnd-1} holds at $t$.

By \mref{set-S}, \mref{def-tg}, \mref{l3b}, and \eqref{gammat-bnd-1}, one has that for all $1 \le t \in \bbT-1$, 
\begin{align*}
\|x^{t+1}-x^*\|^2\leq&\ \frac{1}{(1-2\nu^2)\prod_{i=1}^{t-1}\left(1+\frac{2\mu\gamma_{i}}{1-\eta}\right)}\|x^0-x^*\|^2\notag\\
\leq&\ \frac{r^2_0}{1-2\nu^2}\left(1+\frac{2\mu}{1-\eta}\min\Big\{L_\hS^{-1}\delta\xi, \gamma_0\Big\}\right)^{1-t}.
\end{align*}
It then follows that for all $3 \le t \in \bbT-1$, 
\begin{align}
\max\{\|x^t-x^{t-1}\|,\|x^{t+1}-x^t\|\}&\leq \max\{\|x^t-x^*\|+\|x^{t-1}-x^*\|,\|x^{t+1}-x^*\|+\|x^t-x^*\|\}\notag  \\ 
& \leq \frac{2r_0}{\sqrt{1-2\nu^2}}\left(1+\frac{2\mu}{1-\eta}\min\Big\{L_\hS^{-1}\delta\xi, \gamma_0\Big\}\right)^{\frac{3-t}{2}}.\label{final}
\end{align}

Suppose for contradiction that \aref{ALG} runs for at least $\cT+1$ iterations. It then follows that 
\eqref{alg-term} fails for $t=\cT$, which along with \eqref{def-v} implies that $\|v^\cT\|>\epsilon$.
In addition, recall from Theorem \ref{np1}(ii) that $x^t \in \cS$ for all $t\in\bbT$. By this, \eqref{def-D} and Lemma \ref{F-Lipschtiz}(i), one has 
\beq 
\|\Delta^t\| =  \|F(x^t)-F(x^{t-1})\| \le L_\cS \|x^t-x^{t-1}\| \quad \forall 1 \le t \in \bbT. \label{Delta-bnd}
\eeq
Also, notice from \eqref{t1k} that $ \cT \ge 3$. By this, $\gamma_\cT \le \gamma_0$,  \eqref{t1k},  \eqref{def-v}, \eqref{alpha-bnd}, \eqref{gammat-bnd-1}, \eqref{final}, and \eqref{Delta-bnd}, one has
\begin{align*}
\|v^\cT\| \overset{\eqref{def-v}}{\le} &\ \frac{1}{\gamma_\cT}\left(\|x^{\cT+1}-x^\cT\|+\alpha_\cT \|x^\cT-x^{\cT-1}\|+\gamma_\cT\|\Delta^{\cT+1}\|+\gamma_\cT\beta_\cT\|\Delta^\cT\|\right) \\
\overset{\eqref{Delta-bnd}}{\leq}&\ \frac{1}{\gamma_\cT}\left(\|x^{\cT+1}-x^\cT\|+\alpha_\cT\|x^\cT-x^{\cT-1}\|+\gamma_\cT   L_\cS\|x^{\cT+1}-x^\cT\|+\gamma_\cT\beta_\cT L_\cS\|x^\cT-x^{\cT-1}\|\right)\\
\overset{\eqref{final}}{\leq}&\ \frac{2r_0}{\gamma_\cT\sqrt{1-2\nu^2}}\left(1+\alpha_\cT+\gamma_\cT L_\cS+\gamma_\cT\beta_\cT L_\cS\right)\left(1+\frac{2\mu}{1-\eta}\min\Big\{L_\hS^{-1}\delta\xi, \gamma_0\Big\}\right)^{\frac{3-T}{2}}\\
\overset{\eqref{alpha-bnd}}{\leq}&\ \frac{2r_0}{\gamma_\cT\sqrt{1-2\nu^2}}\left(1+\frac{1}{3}+\gamma_0L_\cS+\gamma_0L_\cS\right)\left(1+\frac{2\mu}{1-\eta}\min\Big\{L_\hS^{-1}\delta\xi, \gamma_0\Big\}\right)^{\frac{3-T}{2}}\\
\overset{\eqref{gammat-bnd-1}}{\leq}&\ \frac{r_0\left(8+12\gamma_0L_\cS\right)}{3\sqrt{1-2\nu^2}\min\left\{L_\hS^{-1}\delta\xi,\gamma_0\right\}}\left(1+\frac{2\mu}{1-\eta}\min\Big\{L_\hS^{-1}\delta\xi, \gamma_0\Big\}\right)^{\frac{3-T}{2}}\overset{\eqref{t1k}}{\leq}\epsilon,
\end{align*}
which leads to a contradiction. Hence, \aref{ALG} terminates in at most $\cT$ iterations. Suppose that \aref{ALG} terminates at iteration $t$ and outputs $x^{t+1}$ for some $t\le \cT$. It then follows that \mref{alg-term} holds for such $t$. By this and \mref{def-v}, one can see that $\|v^t\| \le \epsilon$, which together with \eqref{inclusion} implies that $\res_{F+B}(x^{t+1})\leq\epsilon$. 

Observe that $|\bbT| \le T+2$ and the total number of inner iterations of \aref{ALG} is $\sum^{|\bbT|-2}_{t=1} (n_t+1)$. It follows from these and Theorem \ref{np1}(ii) that
\[
\sum^{|\bbT|-2}_{t=1} (n_t+1) \le 2(|\bbT|-2)+M \le 2T+M, 
\]
which together with \eqref{nlocal1} and  \eqref{t1k} implies that the conclusion holds.
\end{proof}

\subsection{Proof of the main result in Section \ref{PDE2}}
\label{sec:pf-PDE2}

In this subsection we first establish several technical lemmas and then use them to prove Theorem \ref{thm2}.

Recall from Section \ref{PDE2} that $\{z^k\}_{k\in \bbK}$ denotes all the iterates generated by \aref{PPA}, where $\bbK$ is a subset of consecutive nonnegative integers starting from $0$.
Notice that at iteration $0\leq k \in\bbK-1$ of \aref{PPA}, \aref{ALG} is called to find an approximate solution of the following strongly MI problem
\begin{align}
\label{subproblem}
0\in(F_k+B)(x) = (F+B)(x)+\rho_k^{-1}(x-z^k).
\end{align}
Since $F+B$ is maximal monotone, it follows that the domain of the resolvent of $F+B$ is $\rr^n$. As a result, there exists some $z^k_*\in\rr^n$ such that
\beq
z^k_*= \left(I+\rho_k(F+B)\right)^{-1}(z^k). \label{zks}
\eeq 
Moreover, $z^k_*$ is the unique solution of problem \eqref{subproblem} and thus
\begin{align}
0 \in(F_k+B)(z^k_*) = (F+B)(z^k_*)+\rho_k^{-1}(z^k_*-z^k). \label{xstar}
\end{align}

\begin{lemma}
\label{ppal1}
Let $\{z^k\}_{k\in\bbK}$ be generated by \aref{PPA}. Then for all $0 \le k \in \bbK-1$, we have
\begin{align}
\|z^{k+1}-z_*^k\|\leq\rho_k\tau_k, \label{c3}
\end{align}
where $z_*^k$ is defined in \eqref{zks}.
\end{lemma}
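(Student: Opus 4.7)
The plan is to chain together the termination guarantee of Algorithm \ref{ALG} applied to the perturbed problem \eqref{subproblem} with the strong monotonicity of $F_k+B$. The whole argument is short and standard; the only bookkeeping is verifying that the hypotheses of Theorem \ref{thm-outer} transfer from $(F,B)$ to $(F_k,B)$.

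First, I would check that Algorithm \ref{ALG} is legitimately being called on \eqref{subproblem}. Assumption \ref{ass0} must hold for the operator $F_k+B$ with monotonicity parameter $\rho_k^{-1}$: maximal monotonicity of $B$ is inherited; $F_k$ is the sum of the monotone locally Lipschitz $F$ and the globally Lipschitz affine map $x\mapsto(x-z^k)/\rho_k$, so $F_k$ is itself monotone and locally Lipschitz on $\cl(\dom B)$; adding this affine term to a monotone $F+B$ yields strong monotonicity with parameter $\rho_k^{-1}$; and existence of a solution $z_*^k$ of \eqref{subproblem} is guaranteed by the resolvent formula \eqref{zks}. Thus Theorem \ref{thm-outer}, applied with $(F,\mu,\epsilon,x^0)\leftarrow(F_k,\rho_k^{-1},\tau_k,z^k)$, ensures that the returned $z^{k+1}$ satisfies $\res_{F_k+B}(z^{k+1})\le\tau_k$, i.e., there is some $v\in(F_k+B)(z^{k+1})$ with $\|v\|\le\tau_k$.

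Next, using the strong monotonicity of $F_k+B$ with parameter $\rho_k^{-1}$ between the pair $(z^{k+1},v)$ and $(z_*^k,0)$ (the latter from \eqref{xstar}):
\begin{equation*}
\langle v,z^{k+1}-z_*^k\rangle\ge\frac{1}{\rho_k}\|z^{k+1}-z_*^k\|^2.
\end{equation*}
Cauchy--Schwarz on the left-hand side gives $\|v\|\,\|z^{k+1}-z_*^k\|\ge\rho_k^{-1}\|z^{k+1}-z_*^k\|^2$, hence $\|z^{k+1}-z_*^k\|\le\rho_k\|v\|\le\rho_k\tau_k$, which is \eqref{c3}.

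There is essentially no obstacle here: the only subtle point is confirming that $F_k$ still satisfies the local Lipschitz hypothesis needed by Theorem \ref{thm-outer} (so that Algorithm \ref{ALG} actually terminates at some iterate with residual $\le\tau_k$) and that the strong monotonicity constant is exactly $\rho_k^{-1}$ so the final division by $\rho_k^{-1}$ yields the stated bound. Both are immediate from the definition of $F_k$ in \eqref{Fk}.
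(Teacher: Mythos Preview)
Your proof is correct and is essentially the same as the paper's. The only cosmetic difference is that the paper unwraps $F_k$ and invokes the (plain) monotonicity of $F+B$ on the pair $\bigl(z^{k+1},\,v-\rho_k^{-1}(z^{k+1}-z^k)\bigr)$ and $\bigl(z_*^k,\,-\rho_k^{-1}(z_*^k-z^k)\bigr)$, whereas you apply the equivalent $\rho_k^{-1}$-strong monotonicity of $F_k+B$ directly to $(z^{k+1},v)$ and $(z_*^k,0)$; both routes land on $\langle v,z^{k+1}-z_*^k\rangle\ge\rho_k^{-1}\|z^{k+1}-z_*^k\|^2$ and finish identically.
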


\begin{proof}
By the definition of $z^{k+1}$ (see step 2 of \aref{PPA}) and Theorem \ref{thm-outer}, there exists some $v \in(F_k+B)(z^{k+1})$ with $\|v\|\leq\tau_k$. It follows from this and \eqref{xstar} that 
 \begin{align*}
 v-\rho_k^{-1}(z^{k+1}-z^k)\in(F+B)(z^{k+1}),\;\; -\rho_k^{-1}(z^k_*-z^k)\in(F+B)(z^k_*).
 \end{align*}
 By the monotonicity of $F+B$, one has
\begin{align*}
\langle v-\rho_k^{-1}(z^{k+1}-z^k)+\rho_k^{-1}(z^k_*-z^k),z^{k+1}-z^k_*\rangle\geq0,
\end{align*}
which yields 
\[
\|z^{k+1}-z^k_*\|^2\leq\rho_k\langle v,z^{k+1}-z^k_*\rangle\leq\rho_k\|v\| \|z^{k+1}-z^k_*\|.
\]
It then follows from this and $\|v\|\leq\tau_k$ that $\|z^{k+1}-z^k_*\|\leq\rho_k\|v\|\leq\rho_k\tau_k$.
\end{proof}

\begin{lemma}
\label{ppal2}
Let $\{z^k\}_{k\in\bbK}$ be generated by \aref{PPA}. Then we have
\begin{align}
\|z^s-x^*\|\leq& \ \|z^0-x^*\|+\sum_{k=0}^{s-1}\rho_k\tau_k  \qquad \forall 1 \le s \in \bbK,\label{ppae2} \\
\|z^{s+1}-z^s\|\leq&\ \|z^0-x^*\|+\sum_{k=0}^{s}\rho_k\tau_k  \qquad \forall 1 \le s \in \bbK-1 \label{zdiff-bnd}.
\end{align}
\end{lemma}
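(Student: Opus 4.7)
The plan is to prove \eqref{ppae2} by an inductive use of Lemma \ref{ppal1} together with a firm-nonexpansiveness-type bound $\|z^k_* - x^*\| \le \|z^k - x^*\|$, and then derive \eqref{zdiff-bnd} from \eqref{ppae2} via the triangle inequality plus the complementary bound $\|z^k_* - z^k\| \le \|z^k - x^*\|$.

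First, I would observe that $x^*$ satisfies $0 \in (F+B)(x^*)$ while, from the characterization \eqref{xstar}, $z^k_*$ satisfies $-\rho_k^{-1}(z^k_* - z^k) \in (F+B)(z^k_*)$. Monotonicity of $F+B$ applied to the pair $(x^*, z^k_*)$ yields
\[
\langle z^k - z^k_*, z^k_* - x^*\rangle \ge 0.
\]
Expanding $\|z^k - x^*\|^2 = \|z^k - z^k_* + z^k_* - x^*\|^2$, this inner-product inequality gives the Pythagorean-style bound
\[
\|z^k - z^k_*\|^2 + \|z^k_* - x^*\|^2 \le \|z^k - x^*\|^2,
\]
so in particular $\|z^k_* - x^*\| \le \|z^k - x^*\|$ and $\|z^k_* - z^k\| \le \|z^k - x^*\|$.

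Next, combining $\|z^k_* - x^*\| \le \|z^k - x^*\|$ with the triangle inequality and Lemma \ref{ppal1},
\[
\|z^{k+1} - x^*\| \le \|z^{k+1} - z^k_*\| + \|z^k_* - x^*\| \le \rho_k \tau_k + \|z^k - x^*\|.
\]
A straightforward induction on $s$ then gives \eqref{ppae2}. For \eqref{zdiff-bnd}, I would write
\[
\|z^{s+1} - z^s\| \le \|z^{s+1} - z^s_*\| + \|z^s_* - z^s\| \le \rho_s \tau_s + \|z^s - x^*\|,
\]
using Lemma \ref{ppal1} for the first term and the second bound derived above for the second term; plugging in \eqref{ppae2} for $\|z^s - x^*\|$ immediately produces the desired inequality.

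No substantial obstacle is anticipated: the only non-routine ingredient is recognizing that both $\|z^k_* - x^*\|$ and $\|z^k_* - z^k\|$ are controlled by $\|z^k - x^*\|$ through the same monotonicity inner-product inequality, which is the standard contractive property of the resolvent of $F+B$ evaluated at $z^k$ relative to any zero of $F+B$.
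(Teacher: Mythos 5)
Your proposal is correct and follows essentially the same route as the paper: the same monotonicity inner-product inequality yielding $\|z^k_*-x^*\|\le\|z^k-x^*\|$ and $\|z^k-z^k_*\|\le\|z^k-x^*\|$, the same recursion via Lemma \ref{ppal1} for \eqref{ppae2}, and the same triangle-inequality argument for \eqref{zdiff-bnd}. The only cosmetic difference is that you invoke Lemma \ref{ppal1} inside the induction rather than after summing the telescoping inequalities, which is immaterial.
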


\begin{proof}
By \mref{xstar} and the definition of $x^*$, one has
\begin{align*}
z^k-z^k_* \in \rho_k (F+B)(z^k_*),\quad 0 \in \rho_k (F+B)(x^*),
\end{align*}
which together with the monotonicity of $F+B$ yield
\begin{align*}
0 \le 2 \langle z^k-z^k_*,z^k_*-x^*\rangle = \|z^k-x^*\|^2-\|z^k-z^k_*\|^2-\|z^k_*-x^*\|^2.
\end{align*}
It follows that 
\begin{align*}
\|z^k-z^k_*\|^2+\|z^k_*-x^*\|^2\leq\|z^k-x^*\|^2, 
\end{align*}
which implies that
\begin{align}
\|z^k_*-x^*\|\leq\|z^k-x^*\|,\quad\|z^k-z^k_*\|\leq\|z^k-x^*\|.\label{ineq}
\end{align}
By the first relation in \eqref{ineq}, one has
\[
\|z^{k+1}-x^*\|\leq\|z^{k+1}-z^k_*\|+\|z^k_*-x^*\| \leq\|z^{k+1}-z^k_*\|+\|z^k-x^*\|. 
\]
Summing up the above inequalities for $k=0,\dots,s-1$ yields
\[
\|z^s-x^*\|\leq\|z^0-x^*\|+\sum_{k=0}^{s-1} \|z^{k+1}-z^k_*\|, 
\]
which along with \mref{c3} implies that \eqref{ppae2} holds. In addition, using \eqref{c3} with $k=s$, \eqref{ppae2} and \eqref{ineq}, we have
\begin{align*}
\|z^{s+1}-z^s\|\leq \|z^s-z^s_*\|+\|z^{s+1}-z_*^s\|
\overset{\eqref{c3}, \eqref{ineq}}{\leq} \|z^s-x^*\|+\rho_s\tau_s \overset{\eqref{ppae2}}{\leq} \|z^0-x^*\|+\sum_{k=0}^{s}\rho_k\tau_k.
\end{align*}
Hence, \eqref{zdiff-bnd} holds as desired.
\end{proof}

Define
\begin{align}
\cS_k=& \ \left\{x\in\dom{B}:\ \|x-z^k_*\| \leq\frac{1}{\sqrt{1-2\nu^2}}\|z^k-z^k_*\| \right\} \qquad \forall 0 \le k \in \bbK-1,  \label{set-Sk} \\
\hS_k=&  \ \left\{x\in\dom{B}:\ \|x-z^k_*\|\leq 
\frac{5+9\gamma_0 L_{\cQ}}{3\sqrt{1-2\nu^2}}\|z^k-z^k_*\|\right\} \qquad \forall 0 \le k \in \bbK-1,  \label{set-hSk}
\end{align}
where $z^k_*$ is defined in \eqref{zks}, $L_\cQ$ is given in Lemma \ref{Fk-Lipschtiz}, and $\nu$ and $\gamma_0$ are the input parameters of \aref{PPA}.

\begin{lemma}\label{belong}
Let $\cS_k$ and $\hS_k$ be defined in \eqref{set-Sk} and \eqref{set-hSk}. Then for all  $0 \le k \in \bbK-1$, we have
\begin{align*}
\cS_k\subseteq\cQ,\quad\hS_k\subseteq\hQ,
\end{align*}
where $\cQ$ and $\hQ$ are defined in \mref{set-Q} and \mref{set-hQ}.  Consequently, for all $0 \le k\in \bbK-1$, $F_k$ is $L_\cQ$- and $L_\hQ$-Lipschitz continuous on $\cS_k$ and $\hS_k$, respectively, where $L_\cQ$ and $L_\hQ$ are given in Lemma \ref{Fk-Lipschtiz}.
\end{lemma}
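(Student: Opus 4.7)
The plan is to prove both inclusions by the triangle inequality, reducing everything to a uniform bound on $\|z^k-x^*\|$. Once the inclusions are established, the Lipschitz assertion is immediate from Lemma \ref{Fk-Lipschtiz}.

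For $\cS_k\subseteq\cQ$, I would fix an arbitrary $x\in\cS_k$ and write $\|x-x^*\|\le\|x-z^k_*\|+\|z^k_*-x^*\|$. The first term is controlled directly by the defining inequality of $\cS_k$ in \eqref{set-Sk}. For the second term, I would invoke the estimate $\|z^k_*-x^*\|\le\|z^k-x^*\|$ and also $\|z^k-z^k_*\|\le\|z^k-x^*\|$, both of which were derived (as inequality \eqref{ineq}) inside the proof of \lref{ppal2} via the monotonicity of $F+B$ applied at $z^k_*$ and $x^*$. Combining yields
\[
\|x-x^*\|\le\frac{1}{\sqrt{1-2\nu^2}}\|z^k-z^k_*\|+\|z^k_*-x^*\|\le\Bigl(\frac{1}{\sqrt{1-2\nu^2}}+1\Bigr)\|z^k-x^*\|.
\]

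Next I would obtain a uniform bound on $\|z^k-x^*\|$. From \eqref{ppae2} one has $\|z^k-x^*\|\le\br_0+\sum_{i=0}^{k-1}\rho_i\tau_i$, and the choice of parameters in \aref{PPA} gives $\rho_i\tau_i=\rho_0\tau_0(\sigma\zeta)^i$ with $0<\sigma\zeta<1$. Hence the series converges and $\sum_{i=0}^{k-1}\rho_i\tau_i\le\rho_0\tau_0/(1-\sigma\zeta)=\Lambda$ uniformly in $k$, so $\|z^k-x^*\|\le\br_0+\Lambda$. Plugging back matches exactly the definition of $\cQ$ in \eqref{set-Q}, proving $\cS_k\subseteq\cQ$.

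The inclusion $\hS_k\subseteq\hQ$ follows by the identical argument, with the coefficient $1/\sqrt{1-2\nu^2}$ replaced throughout by $(2+4\gamma_0 L_\cQ)/\sqrt{1-2\nu^2}$, producing precisely the bound used to define $\hQ$ in \eqref{set-hQ}. Finally, since $F_k$ is $L_\cQ$-Lipschitz on $\cQ$ and $L_\hQ$-Lipschitz on $\hQ$ by Lemma \ref{Fk-Lipschtiz}, the Lipschitz continuity of $F_k$ on the smaller sets $\cS_k$ and $\hS_k$ is automatic. There is no real obstacle here; the only delicate point is ensuring the series $\sum_i\rho_i\tau_i$ is summable, which is guaranteed by the assumption $\sigma\zeta<1$ explicitly imposed in \aref{PPA}.
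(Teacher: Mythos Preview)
Your proposal is correct and follows essentially the same route as the paper's own proof: triangle inequality through $z^k_*$, the two bounds from \eqref{ineq}, the uniform control of $\|z^k-x^*\|$ via \eqref{ppae2} together with summability of the geometric series $\sum_i\rho_i\tau_i$, and then the analogous argument for $\hS_k\subseteq\hQ$ followed by an appeal to Lemma~\ref{Fk-Lipschtiz}. There is nothing to add or correct.
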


\begin{proof}
Fix any $0 \le k\in \bbK-1$ and $x\in\cS_k$. By this, \mref{ppae2}, \mref{ineq},  \mref{set-Sk}, and $\sum_{i=0}^{\infty}\rho_i\tau_i=\rho_0\tau_0/(1-\sigma\zeta)$, we have
\begin{align*}
\|x-x^*\|\leq & \ \|x-z^k_*\|+\|z^k_*-x^*\|\overset{\mref{set-Sk}}{\leq}\frac{1}{\sqrt{1-2\nu^2}}\|z^k-z^k_*\|+\|z^k_*-x^*\| \overset{\mref{ineq}}{\leq} \left(\frac{1}{\sqrt{1-2\nu^2}}+1\right)\|z^k-x^*\|\\
\overset{\mref{ppae2}}{\leq}& \ \left(\frac{1}{\sqrt{1-2\nu^2}}+1\right)\left(\|z^0-x^*\|+\sum_{i=0}^{k-1}\rho_i\tau_i\right) 
\leq \ \left(\frac{1}{\sqrt{1-2\nu^2}}+1\right)\left(\|z^0-x^*\|+\frac{\rho_0\tau_0}{1-\sigma\zeta}\right),
\end{align*}
which together with \mref{set-Q} implies that $x\in \cQ$. It then follows that $\cS_k\subseteq\cQ$. Similarly, one can show that $\hS_k\subseteq\hQ$.
By these and the definition of $L_\cQ$ and $L_\hQ$ in Lemma \ref{Fk-Lipschtiz}, one can see that $F_k$ is $L_\cQ$- and $L_\hQ$-Lipschitz continuous on $\cS_k$ and $\hS_k$, respectively.
\end{proof}

\begin{lemma} \label{sub-cplx}
Let $\gamma_0$, $\delta$, $\nu$, $\eta$, $\{\rho_k\}$ and $\{\tau_k\}$ be given in \aref{PPA}, and let $\xi$, $\br_0$ and $\Lambda$ be defined in \eqref{xi}, \eqref{set-Q} and \eqref{C1}. 
Then for any $0 \le k \in \bbK-1$, the number of evaluations of $F$ and resolvent of $B$ performed in the $k$th iteration of \aref{PPA} is at most $\mm_k$, where
\begin{align}
\mm_k=6+2\left\lceil\frac{2\log{\frac{(\br_0+\Lambda)\left(8+12\gamma_0L_\cQ\right)}{3\tau_k\sqrt{1-2\nu^2}\min\left\{L_\hQ^{-1}\delta\xi,\gamma_0\right\}}}}{\log\left(1+\frac{2}{\rho_k(1-\eta)}\min\Big\{L_\hQ^{-1}\delta\xi, \gamma_0\Big\}\right)}\right\rceil_++\left\lceil\frac{\log\left(\frac{\xi}{\gamma_0 L_\hQ}\right)}{\log\delta}\right\rceil_+. \label{t2M}
\end{align}

\end{lemma}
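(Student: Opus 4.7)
The plan is to apply \tref{thm-outer} to the strongly monotone inclusion solved in the $k$th iteration of \aref{PPA}, and then to replace each problem-dependent quantity by a uniform upper bound independent of $k$ so as to match \eqref{t2M} exactly.

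First I would observe that at iteration $k$ of \aref{PPA}, the call to \aref{ALG} is exactly of the form to which \tref{thm-outer} applies, with operator $F_k$ in place of $F$, strong monotonicity parameter $1/\rho_k$ (since $F_k+B$ is $(1/\rho_k)$-strongly monotone on $\dom{B}$), tolerance $\tau_k$ in place of $\epsilon$, initial point $z^k=z^k$, and unchanged input parameters $\gamma_0,\delta,\nu,\xi,\{\kappa_t\}$. The unique solution of this subproblem is $z^k_*$ from \eqref{zks}, so in the notation of \tref{thm-outer} we have $r_0\leftarrow\|z^k-z^k_*\|$. The sets playing the roles of $\cS$ and $\hS$ for this subproblem are precisely $\cS_k$ and $\hS_k$ defined in \eqref{set-Sk}--\eqref{set-hSk}. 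Consequently, \tref{thm-outer} furnishes an upper bound on the total number of evaluations of $F_k$ (equivalently, of $F$, since one evaluation of $F_k$ costs one evaluation of $F$) and resolvent of $B$ performed by this call.

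Second, I would upgrade that bound to one independent of $k$ on the two nontrivial ingredients: the Lipschitz constants and the initial distance to the solution. \lref{belong} already gives $\cS_k\subseteq\cQ$ and $\hS_k\subseteq\hQ$ and, crucially, that $F_k$ is $L_\cQ$-Lipschitz on $\cS_k$ and $L_\hQ$-Lipschitz on $\hS_k$ with constants $L_\cQ,L_\hQ$ independent of $k$. Substituting $L_\cS\leftarrow L_\cQ$ and $L_\hS\leftarrow L_\hQ$ into \eqref{t1n}, and using the elementary identity
\[
\frac{1}{L_\hQ}\bigl(L_\hQ+(2/\rho_k)\delta\nu-(2/\rho_k)\delta\xi\bigr)=1+\frac{2\delta(\nu-\xi)}{\rho_k L_\hQ},
\]
produces exactly the denominator $\log\bigl(\min\{1+2\delta(\nu-\xi)/(\rho_k L_\hQ),\,1+2\gamma_0/\rho_k\}\bigr)$ appearing in \eqref{t2M}, together with an outer factor identical to that in \eqref{t2M} but with $\|z^k-z^k_*\|$ in place of $\br_0+\Lambda$ in the log numerator.

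Finally, I would bound $\|z^k-z^k_*\|$ uniformly in $k$ by combining the second inequality in \eqref{ineq} (derived inside the proof of \lref{ppal2}) with \eqref{ppae2} and the geometric sum
\[
\sum_{i=0}^{\infty}\rho_i\tau_i=\rho_0\tau_0\sum_{i=0}^{\infty}(\sigma\zeta)^i=\frac{\rho_0\tau_0}{1-\sigma\zeta}=\Lambda,
\]
which is finite because $\sigma\zeta<1$. This yields $\|z^k-z^k_*\|\le\|z^k-x^*\|\le\br_0+\Lambda$. Since the log in the numerator is monotone increasing in its argument and the operator $\lceil\cdot\rceil_+$ preserves monotonicity, replacing $\|z^k-z^k_*\|$ by $\br_0+\Lambda$ yields exactly the bound \eqref{t2M}. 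The only obstacle is bookkeeping: one must verify that each substitution into \eqref{t1n} lines up termwise with \eqref{t2M} and that the monotonicity argument justifies enlarging $r_0$; no new operator-theoretic argument beyond \tref{thm-outer}, \lref{ppal2}, and \lref{belong} is needed.
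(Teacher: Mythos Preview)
Your proposal is correct and follows essentially the same approach as the paper: both apply \tref{thm-outer} to the subproblem \eqref{subproblem} with the substitutions $\epsilon\leftarrow\tau_k$, $\mu\leftarrow 1/\rho_k$, $L_\cS\leftarrow L_\cQ$, $L_\hS\leftarrow L_\hQ$, and $r_0\leftarrow\br_0+\Lambda$, invoking \lref{belong} for the Lipschitz constants and \eqref{ineq}--\eqref{ppae2} plus the geometric series for the bound $\|z^k-z^k_*\|\le\br_0+\Lambda$. Your explicit justification that the bound in \eqref{t1n} is monotone increasing in $r_0$, $L_\cS$, and $L_\hS$ (so enlarging these quantities is harmless) is a useful detail that the paper leaves implicit.
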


\begin{proof}
Recall that $F_k+B$ is $1/\rho_k$-strongly monotone and $(F_k+B)^{-1}(0)\neq\emptyset$.
In addition, it follows from \lref{belong} that $F_k$ is $L_\cQ$- and $L_\hQ$-Lipschitz continuous on $\cS_k$ and $\hS_k$, respectively.  Using \mref{ppae2}, \mref{ineq},  and the fact that $\Lambda=\sum_{k=0}^{\infty}\rho_k\tau_k$, we have
\begin{align*}
\|z^k-z^k_*\|\leq\|z^k-x^*\|\leq\|z^0-x^*\|+\sum_{t=0}^{k-1}\rho_t\tau_t\leq \br_0+\Lambda,
\end{align*}
where $z^k_*$ is given in \eqref{zks}. The conclusion then follows from applying Theorem \ref{thm-outer} to the subproblem \mref{subproblem} with $\epsilon$, $\mu$, $L_\cS$, $L_\hS$, and $r_0$ in \eqref{t1n} being replaced by $\tau_k$, $1/\rho_k$, $L_\cQ$, $L_\hQ$, and $\br_0+\Lambda$, respectively. 
\end{proof}

\begin{proof}[\textbf{Proof of \tref{thm2}}]
Suppose for contradiction that \aref{PPA} runs for more than $K+1$ outer iterations. By this and   \aref{PPA}, one can assert that \eqref{ppa-term} does not hold for $k=K$. On the other hand, 
by \eqref{zdiff-bnd}, $\rho_k=\rho_0\zeta^k$ and $\tau_k=\tau_0\sigma^k$, one has
\begin{align*}
\frac{\|z^{K+1}-z^K\|}{\rho_K} \leq \frac{\|z^0-x^*\|+\sum_{k=0}^{K}\rho_k\tau_k}{\rho_K}\leq \frac{\|z^0-x^*\|+\sum_{k=0}^{\infty}\rho_k\tau_k}{\rho_K} = \frac{\br_0+\Lambda}{\rho_0 \zeta^K}.
\end{align*}
This together with the definition of $K$ in \mref{t2N} implies that
\begin{align*}
\frac{\|z^{K+1}-z^K\|}{\rho_K}\leq\frac{\br_0+\Lambda}{\rho_0\zeta^{K}}\leq \frac{\varepsilon}{2},\quad\tau_K=\tau_0\sigma^K\leq\frac{\varepsilon}{2},
\end{align*}
and thus \eqref{ppa-term} holds for $k=K$, which contradicts the above assertion. Hence, \aref{PPA} must terminate in at most $K+1$ outer iterations. 

Suppose that \aref{PPA} terminates at some iteration $k$. Then we have
\begin{align}
\rho_k^{-1}\|z^{k+1}-z^k\| +  \tau_k\leq \varepsilon. \label{bdd1}
\end{align}
In addition, by the definition of $z^{k+1}$ (see step 2 of \aref{PPA}), Theorem \ref{thm-outer}, and \eqref{Fk}, there exists some $v$ such that
\begin{align}
v\in(F+B)(z^{k+1})+\rho_k^{-1}(z^{k+1}-z^k),\quad \|v\|\leq\tau_k. \label{bdd2}
\end{align}
Observe that $v-\rho_k^{-1}(z^{k+1}-z^k)\in(F+B)(z^{k+1})$. It follows from this, 
\eqref{bdd1}, \eqref{bdd2}, and the definition of $\res_{F+B}$ that 
\begin{align*}
\res_{F+B}(z^{k+1})\le\|v-\rho_k^{-1}(z^{k+1}-z^k)\|\leq\|v\|+\rho_k^{-1}\|z^{k+1}-z^k\|\overset{\eqref{bdd2}}{\le}\tau_k+\rho_k^{-1}\|z^{k+1}-z^k\| \overset{\eqref{bdd1}}{\le} \varepsilon.
\end{align*}

Recall from Lemma \ref{sub-cplx} that the number of evaluations of $F$ and resolvent of $B$ performed in the $k$th iteration of \aref{PPA} is at most $\mm_k$, where $\mm_k$ is given in \eqref{t2M}. In addition, using \eqref{C1}, \eqref{C2} and \mref{t2M}, we have
\begin{align}
\mm_k=&6+2\left\lceil\frac{2C_1 -2k \log\sigma}{\log\left(\min\left\{1+\frac{2\delta\xi}{L_\hQ\rho_0(1-\eta)\zeta^k}, 1+\frac{2\gamma_0}{\rho_0(1-\eta)\zeta^k}\right\}\right)}\right\rceil_++C_2. \label{Mk}
\end{align}
By the concavity of $\log(1+y)$, one has $\log(1+\vartheta y) \ge \vartheta \log (1+y)$ for all $y>-1$ and 
$\vartheta \in[0,1]$. Using this, we obtain that 
\begin{align}
&\log\left(\min\left\{1+\frac{2\delta\xi}{\rho_0(1-\eta)\zeta^kL_\hQ}, 1+\frac{2\gamma_0}{\rho_0(1-\eta)\zeta^k}\right\}\right) \notag \\ 
&=\ \min\left\{ \log\left(1+\zeta^{-k}\frac{2\delta\xi}{\rho_0(1-\eta)L_\hQ}\right), \log\left(1+\zeta^{-k}\frac{2\gamma_0}{\rho_0(1-\eta)}\right)\right\} \notag \\
& \ge \ \min\left\{\zeta^{-k} \log\left(1+\frac{2\delta\xi}{\rho_0(1-\eta)L_\hQ}\right), \zeta^{-k}\log\left(1+\frac{2\gamma_0}{\rho_0(1-\eta)}\right)\right\}.  \label{log-bnd}
\end{align}

By \eqref{Mk}, \eqref{log-bnd}, $\sigma \in (0,1)$ and $\zeta>1$, one has that for all $k\geq0$,
\begin{align}
\mm_k \overset{\eqref{Mk}}{\le} &\ 8+\frac{\left(4C_1 -4k \log\sigma\right)_+}{\log\left(\min\left\{1+\frac{2\delta\xi}{\rho_0(1-\eta)\zeta^kL_\hQ}, 1+\frac{2\gamma_0}{\rho_0(1-\eta)\zeta^k}\right\}\right)}+C_2 \notag \\ 
\le &  8+\frac{4\left(C_1\right)_+-4k\log\sigma}
{\log\left(\min\left\{1+\frac{2\delta\xi}{\rho_0(1-\eta)\zeta^kL_\hQ}, 1+\frac{2\gamma_0}{\rho_0(1-\eta)\zeta^k}\right\}\right)}+C_2\notag \\ 
\overset{\eqref{log-bnd}}{\le} & \ 8+\frac{4\zeta^k\left( C_1\right)_+-4k\zeta^k\log\sigma}
{\log\left(\min\left\{1+\frac{2\delta\xi}{\rho_0(1-\eta)L_\hQ}, 1+\frac{2\gamma_0}{\rho_0(1-\eta)}\right\}\right)}+C_2.\label{Mk-bound}
\end{align}

Observe that $|\bbK|\leq K+2$ and also the total number of inner iterations of \aref{PPA} is at most $\sum^{|\bbK|-2}_{t=0} \mm_t$. It then follows from \eqref{C2}, \eqref{Mk} and \eqref{Mk-bound} that the total number of evaluations of $F$ and resolvent of $B$ performed in \aref{PPA} is at most
\begin{align}
\sum_{k=0}^{|\bbK|-2}\mm_k \le& \ \sum_{k=0}^{K}\left(8+\frac{4\zeta^k\left(C_1\right)_+-4k\zeta^k\log\sigma}
{\log\left(\min\left\{1+\frac{2\delta\xi}{\rho_0(1-\eta)L_\hQ}, 1+\frac{2\gamma_0}{\rho_0(1-\eta)}\right\}\right)}+C_2\right) \notag \\
\le&\ 8K+8+4\left(C_1\right)_+C_3\zeta^{K+1}+4C_3(\log\sigma^{-1}) K\zeta^{K+1}+(K+1)C_2, \label{sum-Mk}
\end{align}
where the second inequality is due to $\sum_{k=0}^K\zeta^k\leq\zeta^{K+1}/(\zeta-1)$ and $\sum_{k=0}^Kk\zeta^k\leq K\zeta^{K+1}/(\zeta-1)$. By the definition of $K$ in \eqref{t2N}, one has
\[
K \le  \max \left\{\log_{\zeta}\left(\frac{2\br_0+2\Lambda}{\varepsilon\rho_0}\right)+1,\frac{\log(2\tau_0/\varepsilon)}{\log(1/\sigma)}+1,0\right\}, 
\]
which together with $\zeta>1$ implies that 
\begin{align}
\zeta^K \le  & \ \max\left\{\frac{2\zeta(\br_0+\Lambda)}{\varepsilon\rho_0},\zeta\left(\frac{2\tau_0}{\varepsilon}\right)^{\frac{\log\zeta}{\log(1/\sigma)}},1\right\}. \label{zeta-bnd}
\end{align}
Using  \eqref{t2N}, \eqref{complexity}, \eqref{sum-Mk} and \eqref{zeta-bnd}, we can see that $\sum_{k=0}^{|\bbK|-2}\mm_k \le \mm$.
\end{proof}

\section{Concluding remarks}\label{sec:cr}
We proposed primal-dual extrapolation methods enjoying an operation complexity of $\oo{\log\varepsilon^{-1}}$ and $\oo{\varepsilon^{-1}\log \varepsilon^{-1}}$, measured by the number of fundamental operations for finding an $\epsilon$-residual solution of strongly and non-strongly monotone inclusion problems under local Lipschitz continuity, respectively. The latter complexity significantly improves upon the previously best operation complexity $\cO(\varepsilon^{-2})$ achieved by the FRBS method \cite{malitsky2020forward}. 

One natural question is whether the aforementioned operation complexity of $\oo{\varepsilon^{-1}\log \varepsilon^{-1}}$ can be improved to $\oo{\varepsilon^{-1}}$, which would match the optimal complexity for solving non-strongly monotone inclusion problems under global Lipschitz continuity. Additionally, our proposed methods require the exact resolvent of $B$, which limits their applicability.  Clearly,  a method using the inexact resolvent of $B$ for the monotone inclusion problem would be both practically and theoretically interesting. It is worthwhile to explore these as future research directions.

\end{document}